\definecolor{vert}{rgb}{0,0.6,0}
\theoremstyle{plain}
\newtheorem{thm}{Theorem}[section]
\newtheorem{lem}[thm]{Lemma}
\newtheorem{prop}[thm]{Proposition}
\theoremstyle{remark}
\newtheorem{rem}{\bf{Remark}}
\numberwithin{equation}{section}
\newcommand{\N}{\mathbb{N}}
\newcommand{\R}{\mathbb{R}}
\newcommand{\T}{\mathbb{T}}
\newcommand{\Z}{\mathbb{Z}}
\newcommand{\cF}{\mathcal{F}}
\newcommand{\cM}{\mathcal{M}}
\newcommand{\cP}{\mathcal{P}}
\newcommand{\cE}{\mathcal{E}}
\newcommand{\Li}{L^{\infty}}
\newcommand{\gam}{\gamma}
\newcommand{\del}{\delta}
\newcommand{\ep}{\varepsilon}
\newcommand{\om}{\omega}
\newcommand{\Del}{\Delta}
\newcommand{\ol}{\overline}
\newcommand{\supp}{{\rm supp}\,}
\begin{document}

\title[Selection problems for a degenerate viscous HJ equation]
{Selection problems for a discounted 
\\ degenerate viscous Hamilton--Jacobi equation}

\author[H. MITAKE, H. V. TRAN]
{Hiroyoshi Mitake and Hung V. Tran}

\thanks{
The work of HM was partially supported by JST program to disseminate tenure tracking system, and the work of HT was partially supported in part by NSF grant DMS-1361236. 
}

\address[H. Mitake]{
Institute for Sustainable Sciences and Development, 
Hiroshima University 1-4-1 Kagamiyama, Higashi-Hiroshima-shi 739-8527, Japan}
\email{hiroyoshi-mitake@hiroshima-u.ac.jp}

\address[H. V. Tran]
{
Department of Mathematics, 
The University of Chicago, 5734 S. University Avenue, Chicago, Illinois 60637, USA}
\email{hung@math.uchicago.edu}

\keywords{Selection problem; Degenerate viscous Hamilton--Jacobi equations; Ergodic problems; Nonlinear adjoint methods}
\subjclass[2010]{
35B40, 
37J50, 
49L25 
}

\maketitle

\begin{abstract}
We prove that the solution of the discounted approximation of
a degenerate viscous Hamilton--Jacobi equation with convex Hamiltonians converges to that of the associated ergodic problem. 
We characterize the limit in terms of stochastic Mather measures by naturally  using the nonlinear adjoint method, and 
deriving a commutation lemma. 
This convergence result was first achieved
 by Davini, Fathi, Iturriaga, and Zavidovique for the first order Hamilton--Jacobi equation.  
\end{abstract}



\section{Introduction and main result}
In this paper we study the asymptotic limit, as $\ep \to 0$, of the solution of 
the following approximation of the ergodic problem for Hamilton--Jacobi equations with a possibly degenerate diffusion: 
\begin{equation} \notag
{\rm (E)}_\ep \qquad 
\ep u^\ep +H(x,Du^\ep)=a(x) \Del u^\ep \qquad \text{in} \ \T^n,
\end{equation}
where $\T^n$ is the $n$-dimensional torus $\R^n / \Z^n$. 
The functions $H:\T^n \times \R^n \to \R$, 
$a:\T^n \to [0,\infty)$
are a given Hamiltonian, and a diffusion coefficient,  respectively.
We call (E)$_{\ep}$ the \textit{discounted approximation} of the ergodic problem 
as the corresponding value function $u^\ep$  interpreted from  the stochastic optimal control theory has the discount factor $\ep$. 

We assume the following conditions (H1) and (H2) \textit{throughout} this paper:  
\begin{itemize}
\item[(H1)] $H \in C^2(\T^n \times \R^n)$, 
$p \mapsto H(x,p)$ is convex for each $x\in \T^n$, and there exists $C>0$ so that  
\begin{align*}
&|D_x H(x,p)| \le C(1+H(x,p)), \quad &&\text{for all} \ (x,p) \in \T^n \times \R^n,\\
&\lim_{|p| \to +\infty} \frac{H(x,p)}{|p|}=+\infty,  \quad &&\text{uniformly for } \ x \in \T^n,
\end{align*}
\item[{(H2)}] 
$a\ge 0$ in $\T^n$,  and $a\in C^2(\T^n)$.
\end{itemize}

The discounted approximation appears naturally when we study the existence of solutions to the \textit{ergodic problem}: 
\begin{equation}\label{eq:erg}
{\rm (E)} \qquad H(x,Du)=a(x)\Del u+c \qquad\text{in} \ \T^n.  
\end{equation}
We here seek for a pair of unknowns $(u,c)\in C(\T^n)\times\R$ in the viscosity sense.
The existence of this problem was first studied by Lions, Papanicolaou, and Varadhan \cite{LPV} in the case $a \equiv 0$
in the context of the study of periodic homogenization of first order Hamilton--Jacobi equations.
The procedure of studying the existence of $(v,c)$ of (E) can be done as follows.
By using the Bernstein method, we can prove a priori estimate
\begin{equation}\label{intro:apriori}
\|Du^{\ep}\|_{\Li(\T^n)}\le C \quad \text{for some} \ C>0,
\end{equation}
under Assumptions (H1), and (H2).  
See Mitake and Tran \cite[Proposition 1.1]{MT4}, or Armstrong and Tran \cite[Theorem 3.1]{AT} and the references therein for instance. 
Once \eqref{intro:apriori} is achieved, we can easily see that 
\[
\{u^{\ep}(\cdot)-u^{\ep}(x_0)\}_{\ep>0} \quad \text{is uniformly bounded and equi-Lipschitz continuous in } \ \T^n, 
\]
for some fixed $x_0\in\T^n$. 
Therefore, in view of the Arzel\'a-Ascoli theorem, there exists a subsequence 
$\{\ep_j\}_{j\in\N}$ with $\ep_j\to0$ as $j\to\infty$ such that 
\begin{equation}\label{conv:sub}
\ep_{j}u^{\ep_j}\to-c\in\R, 
\quad u^{\ep_j}-u^{\ep_j}(x_0)\to u\in C(\T^n) 
\quad \text{uniformly in} \ \T^n \ \text{as} \ j\to\infty,   
\end{equation}
where $(u,c)$ is a solution of \eqref{eq:erg}. 
By  a simple argument using the comparison principle, one can show that $c$ is unique, which is called the \textit{ergodic constant}.
However, $u$ is not unique in general even up to additive constants.
We assume without loss of generality that $c=0$ henceforth.

Let us notice that the procedure above is a soft approach mainly using tools from functional analysis. 
In particular, the convergence \eqref{conv:sub} is just along subsequences.
An important question to be studied is whether this convergence holds for the whole sequence $\ep \to 0$ or not. 
This question was first addressed by Gomes \cite{G2}, Iturriaga and Sanchez-Morgado \cite{ISM2} under rather restricted assumptions. 
Very recently, Davini, Fathi, Iturriaga and Zavidovique \cite{DFIZ} gave a rather complete and positive answer for this question
in case $a \equiv 0$ by using a dynamical system approach in light of weak KAM theory and characterizing the limit in terms of Mather measures. 
They proved that there exists a solution $(u^0,0)$ of (E) with $a\equiv0$ such that
\begin{equation}\label{ques-conv}
u^\ep \to u^{0} \quad \text{uniformly in} \ \T^n\ \text{as} \ \ep \to 0,
\end{equation}
and provided a characterization of the limit $u^0$.
Also, in A.-Aidarous, Alzahrani, Ishii, and Younas \cite{AAIY}, the same type of convergence problem is obtained under the Neumann boundary condition by using a similar approach as in \cite{DFIZ}. 
We emphasize that all the results aforementioned are for first order Hamilton--Jacobi equations ($a\equiv 0$) as the methods there
use deep properties of extremal curves of optimal control theory formulae of solutions of (E), and
minimizing properties of Mather measures. 
See \cite{M, Man, F1, FaB, FS, CGT1} for the study on the weak KAM theory and Mather measures.

In this paper, we investigate the degenerate viscous Hamilton--Jacobi equation (E)$_{\ep}$ 
and likewise address the question on the asymptotic limit of $u^\ep$ as $\ep \to 0$.
This is within the context of studies on deep understanding of dynamical properties of this class of PDEs (see  Cagnetti, Gomes, Mitake and Tran \cite{CGMT}, Mitake and Tran \cite{MT4}.) 
In order to do so, we first need to construct \textit{stochastic Mather measures} for the possibly degenerate difussion matrix $a(x) I_n$, where $I_n$ is the identity matrix of size $n$. In a special case where $a \equiv 1$, Gomes \cite{G1}, Iturriaga and Sanchez-Morgado \cite{ISM1} already constructed stochastic Mather measures and studied their deep properties. We quickly note that in this case, convergence \eqref{ques-conv} is straightforward as (E) has a unique solution (up to additive constants.) 
On the other hand, as far as the authors know, there is no results on the study of Mather measures related to general degenerate viscous Hamilton--Jacobi equations. 
We here give a construction of Mather measures by using the nonlinear adjoint method and use them as building blocks to establish convergence \eqref{ques-conv}, which was also not known up to now.

Evans \cite{Ev1} introduced the nonlinear adjoint method for the first order Hamilton-Jacobi equations  to study the  vanishing viscosity process, and gradient shock structures of viscosity solutions  of non convex Hamilton--Jacobi equations. 
Afterwards, the second author \cite{T1} used it to establish a rate of convergence for static Hamilton--Jacobi equations with non convex Hamiltonians. 
The key point of this new method is the introduction of a further equation to derive new information of the solution of the regularized Hamilton--Jacobi equation. More precisely, we linearize the regularized Hamilton--Jacobi equation and then introduce the corresponding adjoint equation. 
Looking at the behavior of the solution of the adjoint equation, we can derive new identities and estimates, which could not be obtained by previous techniques.  For instance, the second author with Cagnetti and Gomes \cite{CGT1} used the adjoint equation to derive Mather measures for the first order Hamilton--Jacobi equations with general non convex Hamiltonians.
The method of building Mather measures in this paper has some flavor similar to that of \cite{CGT1}.
The authors with Cagnetti and Gomes  \cite{CGMT}, and the authors  \cite{MT4} established  large-time behavior of solutions of various evolutionary degenerate viscous Hamilton--Jacobi equations, 
and obtained new estimates on long time averaging effects in light of the nonlinear adjoint method.
See also \cite{CGT2, Ev2} for recent developments on the study of Hamilton--Jacobi equations by using this method.

By using a method similar to that of \cite{CGT1,CGMT}, we can build stochastic Mather measures naturally.
By using these measures, we prove key estimates, Lemma \ref{lem:key2}, Proposition \ref{prop:key1} which are analogies of \cite[Lemma 5.4, Proposition 5.2]{DFIZ}, respectively. 
In order to prove these key estimates, we need to regularize subsolutions of (E) and encounter difficulties caused by a possibly degenerate diffusion.  This difficulty is overcome by considering a so-called \textit{commutation lemma},  Lemma \ref{lem:com}. We take the motivation of this from the works of Lions \cite{L81}, Di Perna and Lions \cite{DiL}, Ambrosio \cite{Am}, and Le Bris and Lions \cite{LeL}.

To finalize Introduction, we state  our main result here.
\begin{thm}\label{thm:main}
The following convergence holds
\begin{equation*}
u^\ep(x)\to u^0(x):=\sup_{\phi \in \cE} \phi(x) \quad\text{uniformly for} \ x\in\T^n \ 
\text{as} \ \ep\to0,   
\end{equation*}
where we denote by $\cE$ the family of solutions $u$ of {\rm(E)} satisfying 
\begin{equation}\label{class-sub}
\int_{\T^n \times \R^n} u
 \,d\mu \leq 0 \qquad \text{for all} \ \mu \in \cM.    
\end{equation}
The set $\cM$ of probability measures on $\T^n\times\R^n$, which are stochastic Mather measures,
is defined in Section {\rm\ref{subsec:mather}}. 
\end{thm}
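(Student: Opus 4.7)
The plan is to show that every uniform subsequential limit of $u^\ep$ in $C(\T^n)$ equals $u^0$; then the whole sequence must converge. Relative compactness of $\{u^\ep\}$ is standard: equi-Lipschitz continuity follows from the Bernstein estimate \eqref{intro:apriori}, while uniform boundedness of $u^\ep$ itself is obtained by comparing with a fixed solution of (E) through the maximum principle (so that $\|u^\ep\|_{\Li}\le\|w\|_{\Li}$ for any solution $w$ of (E)). Stability of viscosity solutions then guarantees that any limit $v=\lim_{j\to\infty} u^{\ep_j}$ is itself a solution of (E), so it remains to prove the two inequalities $v\le u^0$ and $v\ge u^0$.

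For the upper bound $v\le u^0$, it suffices to show that $v\in\cE$, i.e.\ $\int_{\T^n\times\R^n} v\,d\mu\le 0$ for every $\mu\in\cM$. The stochastic Mather measures, constructed via the nonlinear adjoint method, are designed precisely so that the Hamiltonian--Laplacian contribution in $(\mathrm{E})_\ep$ integrates non-negatively against $\mu$ in the limit; this is essentially the content of Proposition \ref{prop:key1}. Consequently $\ep\int u^\ep\,d\mu\le o_\ep(1)$, and dividing by $\ep$ and passing to the limit along $\ep=\ep_j\to 0$ yields $\int v\,d\mu\le 0$.

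The lower bound $u^0\le v$ is the more delicate half. Fix $\phi\in\cE$; the goal is to show $\phi\le v$ on $\T^n$. Heuristically, at a maximum point of $\phi-u^\ep$ one would combine (E) for $\phi$ with $(\mathrm{E})_\ep$ for $u^\ep$ to extract $\ep u^\ep\ge 0$ and thereby obtain $\phi\le u^\ep+o_\ep(1)$. Carrying this out rigorously requires regularizing $\phi^\delta:=\phi*\rho_\delta$, but in the degenerate setting the commutator $a\Del\phi^\delta-(a\Del\phi)*\rho_\delta$ is not pointwise small. The commutation lemma, Lemma \ref{lem:com}, in the spirit of DiPerna--Lions, Ambrosio, and Le Bris--Lions, controls this commutator in the integrated sense needed for our purposes. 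Combined with the key estimate Lemma \ref{lem:key2}, the constraint $\int\phi\,d\mu\le 0$ coming from $\phi\in\cE$ can be upgraded to a pointwise bound $\phi\le u^\ep+o_\ep(1)$, and passing $\ep_j\to 0$ gives $\phi\le v$.

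The decisive obstacle is therefore the lower bound in the degenerate setting: without uniform ellipticity, the standard mollification of a subsolution produces a commutator error that cannot be handled by routine means. Lemma \ref{lem:com} is the new technical ingredient---absent from the first-order analysis of Davini--Fathi--Iturriaga--Zavidovique---which, together with the Mather-measure construction of Section \ref{subsec:mather} and the two key estimates, allows the adjoint-measure duality to close the argument in the present second-order degenerate context.
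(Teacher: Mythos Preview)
Your outline is correct and follows the paper's approach: Proposition \ref{prop:key1} gives the upper bound (any subsequential limit lies in $\cE$), while Lemma \ref{lem:key2} together with the commutation Lemma \ref{lem:com} (and its uniform rate, Lemma \ref{lem:unif}) gives the lower bound $\liminf u^\ep(x_0)\ge\phi(x_0)$ for each $\phi\in\cE$. One small remark: the maximum-point heuristic you offer for the lower bound does not quite produce $\phi\le u^\ep+o_\ep(1)$; the actual mechanism is the adjoint measure $\theta^{\ep,\eta}$ with source $\ep\delta_{x_0}$, which converts Lemma \ref{lem:key2} into a pointwise estimate at $x_0$ after sending $\eta_k\to 0$ (this is where the rate $\|S^\eta\|_\infty\le C\eta^{1/2}$ is needed to kill the $\ep^{-1}\!\int S^\eta\theta^{\ep,\eta}$ term) and then $\ep_j\to 0$, at which point the constraint $\int\phi\,d\mu\le 0$ enters.
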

We point out one delicate thing here that $\cE$ is a family of solutions not subsolutions in Theorem \ref{thm:main} which is different from that of \cite{DFIZ} because of the diffusion term.
We will discuss this again in Section \ref{sec:com} more precisely.

We finally point out that the problem on convergence of solutions of the discounted Hamilton--Jacobi equation with non convex Hamiltonian remains completely open. 
There is another type of approximation for the ergodic problem (E), 
the vanishing  viscosity method, and the convergence of solutions to a unique limit still remains rather open. 
Under relatively restrictive assumptions on the Aubry set, the convergence is proved. See \cite{B, AIPSM}. 
 
\medskip
The paper is organized as follows.
Section \ref{subsec:mather} is devoted to the introduction of the nonlinear adjoint 
method in this setting, and the construction of stochastic Mather measures which   play a crucial role in this paper. 
In Section \ref{subsec:key-est} we give important estimates 
by using stochastic Mather measures constructed in Section \ref{subsec:mather}. 
In Section \ref{sec:com} we give the proof of the commutation lemma, and  
in Section \ref{sec:main-proof} we finally give the proof of Theorem \ref{thm:main}.

\medskip
\noindent
\textbf{Acknowledgement.} 
The first author would like to thank Albert Fathi and Hitoshi Ishii for sending him the preprints \cite{DFIZ} and \cite{AAIY}, respectively, at a timely occasion. The authors thank Craig Evans and Hitoshi Ishii for useful comments and suggestions. 

\section{Key observations and estimates}\label{sec:key-ob}
Recall that we assume that the ergodic constant is $0$. 
The ergodic problem now becomes
\begin{equation*}
{\rm (E)} \qquad H(x,Du)=a(x)\Del u \qquad \text{in} \ \T^n. 
\end{equation*}

\subsection{Regularization process and construction of $\cM$}\label{subsec:mather}
We denote by $\cP(\T^n \times \R^n)$ the set of probability measures on $\T^n \times \R^n$.
Let the function $L:\T^n\times\R^n\to\R$ be the Legendre transform of $H$, i.e.,
\[
L(x,v):=\sup_{p \in \R^n} \left( p\cdot v - H(x,p)\right).
\]
By (H1), $L$ is finite on $\T^n\times\R^n$, of class $C^1$, and superlinear.

For each $\eta>0$, we consider an approximation of (E)$_\ep$ as 
\begin{align*}
&
{\rm (E)}_\ep^\eta \qquad \ep u^{\ep,\eta}+H(x,Du^{\ep,\eta})=(a(x)+\eta^2) \Del u^{\ep,\eta} \qquad \text{in} \ \T^n. 
\end{align*}
The following result is quite standard. See \cite{Ev1, T1, CGT1, CGMT} for instance.
\begin{lem}\label{lem:p1} 
There exists a constant $C>0$ independent of $\ep$ and $\eta$ so that
 \[
\|u^{\ep,\eta}-u^\ep\|_{L^\infty(\T^n)} \leq C\ep^{-1}\eta.
\]
\end{lem}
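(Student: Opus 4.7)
The strategy is a viscosity comparison argument: since $a+\eta^2 \ge \eta^2 > 0$, the equation $(E)_\ep^\eta$ is uniformly elliptic and $u^{\ep,\eta} \in C^\infty(\T^n)$, which allows us to use $u^{\ep,\eta}$ as a smooth test function at the extrema of $u^\ep - u^{\ep,\eta}$. The standard Bernstein method, following the same lines as the proof of \eqref{intro:apriori} recalled in the introduction, produces the uniform a priori bound $\ep\|u^{\ep,\eta}\|_{L^\infty} + \|Du^{\ep,\eta}\|_{L^\infty} \le C$, with $C$ independent of both $\ep$ and $\eta$.

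The key technical input is the sharp weighted second-order estimate
\[
\sqrt{a(x)+\eta^2}\,|D^2 u^{\ep,\eta}(x)| \le C \qquad\text{for all } x \in \T^n,
\]
uniformly in $\ep,\eta$. This is obtained by a Bernstein calculation applied to $\tfrac12|Du^{\ep,\eta}|^2$ after differentiating $(E)_\ep^\eta$ once, exploiting the convexity of $H$ in $p$ together with the structural inequality $|Da|^2 \le 2\|D^2 a\|_{L^\infty}\,a$, which is valid because $a \in C^2(\T^n)$ with $a\ge 0$. This kind of estimate is classical in the nonlinear adjoint setting; see \cite{Ev1, T1, CGT1, CGMT, MT4}. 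As a direct consequence, since $\sqrt{a+\eta^2}\ge\eta$, one has $|\Delta u^{\ep,\eta}|\le \sqrt{n}\,C/\sqrt{a+\eta^2}\le \sqrt{n}\,C/\eta$, and hence
\[
\eta^2 \|\Delta u^{\ep,\eta}\|_{L^\infty(\T^n)} \le C\eta.
\]

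With this estimate in hand, the comparison is immediate. Let $x_0\in\T^n$ be a maximum point of $u^\ep - u^{\ep,\eta}$. Using $u^{\ep,\eta}$ as a $C^2$ test function for $u^\ep$ from above at $x_0$, the viscosity subsolution inequality yields
\[
\ep u^\ep(x_0) + H(x_0, Du^{\ep,\eta}(x_0)) \le a(x_0)\,\Delta u^{\ep,\eta}(x_0).
\]
Subtracting the identity $\ep u^{\ep,\eta}(x_0) + H(x_0, Du^{\ep,\eta}(x_0)) = (a(x_0)+\eta^2)\Delta u^{\ep,\eta}(x_0)$ satisfied pointwise by $u^{\ep,\eta}$ gives
\[
\ep(u^\ep - u^{\ep,\eta})(x_0) \le -\eta^2\Delta u^{\ep,\eta}(x_0) \le C\eta,
\]
so $\max_{\T^n}(u^\ep - u^{\ep,\eta}) \le C\eta/\ep$. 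The symmetric argument at a minimum point, using the supersolution inequality for $u^\ep$, gives $\max_{\T^n}(u^{\ep,\eta}-u^\ep)\le C\eta/\ep$ and finishes the proof.

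The only nontrivial point — and the main obstacle — is the sharpness of the second-order estimate. The naive bound $(a+\eta^2)|\Delta u^{\ep,\eta}|\le C$ that follows immediately from $(E)_\ep^\eta$ yields only $\eta^2|\Delta u^{\ep,\eta}|\le C$, and would therefore give the useless rate $C/\ep$. Extracting the extra factor $\eta$ is precisely where the structural condition $a\in C^2$ (rather than merely Lipschitz) and the convexity of $H$ enter through the Bernstein computation; once this weighted Hessian bound is established, the rest of the proof is a one-line comparison.
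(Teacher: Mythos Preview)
The paper itself does not supply a proof of this lemma; it simply declares the result ``quite standard'' and points to \cite{Ev1, T1, CGT1, CGMT}. So there is no in-paper argument to compare against, and what follows is an assessment of your proposal on its own merits.

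Your comparison step is clean and correct: once you know $\eta^2\|\Delta u^{\ep,\eta}\|_{L^\infty}\le C\eta$, touching $u^\ep$ from above (resp.\ below) by the smooth $u^{\ep,\eta}$ at the extremum of $u^\ep-u^{\ep,\eta}$ and subtracting the two equations gives the result in one line. That part is fine.

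The gap is in the justification of the weighted Hessian bound. A Bernstein calculation on $w=\tfrac12|Du^{\ep,\eta}|^2$ (i.e.\ differentiating $(E)_\ep^\eta$ once, multiplying by $Du^{\ep,\eta}$, and summing) produces the identity
\[
2\ep w + D_xH\cdot Du + D_pH\cdot Dw
=(Da\cdot Du)\,\Delta u + (a+\eta^2)\big(\Delta w - |D^2 u|^2\big),
\]
and after using $|Da|\le C\sqrt{a}$ this does yield $(a+\eta^2)|D^2u|^2\le C$ --- \emph{but only at a maximum point of $w$}. This is exactly how the gradient estimate \eqref{intro:apriori} is obtained, and the Hessian term appears there as a good-sign byproduct at that single point. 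It does \emph{not} give a pointwise bound on $\sqrt{a+\eta^2}\,|D^2u^{\ep,\eta}|$ over all of $\T^n$; nothing in the first-derivative Bernstein argument propagates the Hessian control away from $\arg\max w$. Convexity of $H$ plays no role in this computation either (it would enter only if you differentiated twice, through the term $D_p^2H[Du_e,Du_e]\ge0$).

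A genuine pointwise estimate of the form $\eta|\Delta u^{\ep,\eta}|\le C$ requires a second-derivative argument (maximum principle on a suitable function of $D^2u^{\ep,\eta}$, or one-sided semiconcavity/semiconvexity bounds), and here the variable diffusion $a(x)$ generates third-order terms such as $a_e\,\Delta u_e$ that must be absorbed --- this is not automatic and needs more than what you have written. Alternatively, the rate can be obtained by the adjoint-method estimates in the cited references, where the relevant control on $D^2u^{\ep,\eta}$ is an integral bound against $\theta^{\ep,\eta}$ rather than a pointwise one. Either way, the sentence ``Bernstein on $\tfrac12|Du|^2$'' does not deliver the key input, and without it your final comparison collapses to the useless $\eta^2|\Delta u^{\ep,\eta}|\le C$ that you yourself flag.
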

We introduce the associated adjoint equation of the linearized operator of (E)$_\ep^\eta$: 
\begin{align*}
&{\rm (AJ)}_\ep^\eta \qquad \ep \theta^{\ep,\eta} - \text{div}(D_pH(x,Du^{\ep,\eta})\theta^{\ep,\eta})=\Del(a(x)\theta^{\ep,\eta})+\eta^2 \Del \theta^{\ep,\eta} + \ep \del_{x_0}\qquad \text{in} \ \T^n
\end{align*}
for some $x_0\in \T^n$, where $\del_{x_0}$ denotes the delta Dirac measure at $x_0$. 
Clearly, we have
$$
\theta^{\ep,\eta}>0 \ \text{in} \ \T^n \setminus \{x_0\}, \quad \text{and} \quad 
\int_{\T^n} \theta^{\ep,\eta}(x)\,dx=1.
$$

For every $\ep,\eta>0$, let $\nu^{\ep,\eta} \in \mathcal{P}(\T^n \times \R^n)$ be a  probability measure satisfying
\begin{equation}\label{def-nu-ep}
\int_{\T^n} \psi(x,Du^{\ep,\eta}) \theta^{\ep,\eta}(x)\,dx=\int_{\T^n \times \R^n} \psi(x,p)\,d\nu^{\ep,\eta}(x,p)
\end{equation}
for all $\psi \in C(\T^n \times \R^n)$.
There exists two subsequences $\ep_j \to 0$ and $\eta_k \to 0$  as $j\to\infty$, 
$k\to\infty$, respectively, and probability measures $\nu^{\ep_j}, \nu \in \cP(\T^n \times \R^n)$ so that
\begin{equation}\label{mather}
\begin{array}{ll}
\nu^{\ep_j,\eta_k}\rightharpoonup \nu^{\ep_j}  
\quad&\text{as} \ \ k\to\infty,\\
\nu^{\ep_j}\rightharpoonup \nu  
\quad&\text{as} \ \ j\to\infty,
\end{array}
\end{equation}
in term of measures.
Notice that the limit $\nu$ might be different for different choices of subsequences $\{\ep_j\}$ and $\{\eta_k\}$.
In general, there could be many such limit $\nu$.
For each such $\nu$, set $\mu \in \mathcal{P}(\T^n \times \R^n)$ to be a pushforward measure of $\nu$ associated with $\Phi(x,v)=(x,D_v L(x,v))$, 
i.e., for all $\psi \in C(\T^n \times \R^n)$,
\begin{equation}\label{def-mu}
\int_{\T^n \times \R^n} \psi(x,p)\,d\nu(x,p)=\int_{\T^n \times \R^n} \psi(x,D_v L(x,v))\,d\mu(x,v).
\end{equation}
We call $\mu$ a \textit{stochastic Mather measure}, and set $\cM$ to be the collection of all such measures $\mu$ constructed above.

The following observations are important in the characterization of $\mu\in\cM$.  
\begin{prop}\label{prop:mather}
Each measure $\mu \in \cM$ has the properties{\rm:} 
\begin{itemize}
\item[(i)] \ 
$\displaystyle \int_{\T^n \times \R^n} L(x,v)\,d\mu(x,v)=0$ \quad
{\rm(}$0$ is the ergodic constant now{\rm)}, 
\item[(ii)] \ 
$\displaystyle 
\int_{\T^n \times \R^n} \left ( v\cdot D\varphi - a(x)\Del \varphi\right) \,d\mu(x,v)=0 
$ 
\quad for any $\varphi\in C^2(\T^n)$.  
\end{itemize}
\end{prop}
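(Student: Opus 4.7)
Both identities are proved at the regularized level $(\ep,\eta)$ by combining $(E)_\ep^\eta$ with $(AJ)_\ep^\eta$ through integration by parts on the torus, and then passed to the limit $\eta_k\to0$ followed by $\ep_j\to0$ as in \eqref{mather}. The pushforward relation \eqref{def-mu} and the Legendre duality identity $L(x,D_pH(x,p))=p\cdot D_pH(x,p)-H(x,p)$ (with $v=D_pH(x,p)$) translate every statement about $\mu$ into one about $\nu$, which by \eqref{def-nu-ep} becomes an integral against the density $\theta^{\ep,\eta}$.

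\textbf{Property (i).} By the above reformulation, $\int L\,d\mu$ is the limit of
\[
I(\ep,\eta):=\int_{\T^n}\bigl(Du^{\ep,\eta}\cdot D_pH(x,Du^{\ep,\eta})-H(x,Du^{\ep,\eta})\bigr)\,\theta^{\ep,\eta}\,dx.
\]
I multiply $(E)_\ep^\eta$ by $\theta^{\ep,\eta}$ and integrate to rewrite $\int H\theta^{\ep,\eta}\,dx=\int(a+\eta^2)\Delta u^{\ep,\eta}\,\theta^{\ep,\eta}\,dx-\ep\int u^{\ep,\eta}\theta^{\ep,\eta}\,dx$, and separately multiply $(AJ)_\ep^\eta$ by $u^{\ep,\eta}$ and integrate by parts (no boundary terms since $\T^n$ is closed), shifting the divergence and both Laplacians onto $u^{\ep,\eta}$. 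The two resulting identities differ by exactly $\ep u^{\ep,\eta}(x_0)$, whence $I(\ep,\eta)=\ep u^{\ep,\eta}(x_0)$. By Lemma \ref{lem:p1} and $c=0$, $\ep u^{\ep,\eta}(x_0)=\ep u^\ep(x_0)+O(\eta)\to 0$ as $\eta\to 0$ and then $\ep\to 0$.

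\textbf{Property (ii).} I test $(AJ)_\ep^\eta$ against $\varphi\in C^2(\T^n)$ and integrate by parts to transfer the divergence off $D_pH\,\theta^{\ep,\eta}$ and each Laplacian off $a\theta^{\ep,\eta}$ and $\theta^{\ep,\eta}$. This produces
\[
\int D\varphi\cdot D_pH(x,Du^{\ep,\eta})\,\theta^{\ep,\eta}\,dx-\int a(x)\Delta\varphi\,\theta^{\ep,\eta}\,dx=\eta^2\!\int\theta^{\ep,\eta}\Delta\varphi\,dx+\ep\Bigl(\varphi(x_0)-\int\varphi\,\theta^{\ep,\eta}\,dx\Bigr).
\]
The left-hand side equals $\int(v\cdot D\varphi-a(x)\Delta\varphi)\,d\nu^{\ep,\eta}$ under the pushforward (via $v=D_pH(x,p)$), while the right-hand side is $O(\eta^2)+O(\ep)$ and so vanishes in the iterated limit.

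\textbf{The main obstacle} is justifying weak-$*$ passage to the limit on the two left-hand sides, since the relevant integrands $p\cdot D_pH(x,p)-H(x,p)$ and $D_pH(x,p)\cdot D\varphi(x)$ are continuous but a priori unbounded in $p$, and the $\nu^{\ep,\eta}$ live on the unbounded space $\T^n\times\R^n$. The resolution is that the Bernstein estimate underlying \eqref{intro:apriori} applies verbatim to $u^{\ep,\eta}$ with diffusion $a+\eta^2\ge a$, giving a uniform bound $\|Du^{\ep,\eta}\|_{\Li(\T^n)}\le C$ independent of $\ep,\eta$. Consequently each $\nu^{\ep,\eta}$, and hence $\nu^{\ep_j}$ and $\nu$, is supported in the compact set $\T^n\times\overline{B}_C(0)$, on which both integrands are bounded and continuous, so standard weak convergence of measures legitimizes the limits.
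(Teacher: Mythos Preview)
Your proof is correct and follows essentially the same route as the paper: you derive the identity $I(\ep,\eta)=\ep u^{\ep,\eta}(x_0)$ by pairing (E)$_\ep^\eta$ and (AJ)$_\ep^\eta$ through integration by parts, test (AJ)$_\ep^\eta$ against $\varphi$ for part (ii), and then pass to the iterated limit using the Legendre duality and the pushforward relation. Your explicit remark that the uniform gradient bound forces all $\nu^{\ep,\eta}$ to be supported in a fixed compact set---so that weak-$*$ convergence applies to the a priori unbounded integrands---is a point the paper leaves implicit; the only minor slip is that in (ii) the left-hand side should be written as $\int (D_pH(x,p)\cdot D\varphi - a(x)\Delta\varphi)\,d\nu^{\ep,\eta}(x,p)$ before invoking the pushforward, not already in the $v$-variable.
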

\begin{rem}
It is worthwhile to point out a delicate issue that we cannot replace $C^2$ test functions by $C^{1,1}$ test functions in Proposition \ref{prop:mather} (ii), since each measure $\mu \in \cM$ can be quite singular and it can see the jumps of $\Del \varphi$ in case $\varphi$ is $C^{1,1}$ but not $C^2$.
This issue actually complicates our analysis later on as we have to build $C^2$-approximated subsolutions of (E), which is not quite standard in the theory of viscosity solutions to second order degenerate elliptic or parabolic equations. 
We will clearly address this point in Section \ref{sec:com}.
\end{rem}

\begin{proof}
We rewrite (E)$_\ep^\eta$ as
\begin{align*}
&\ep u^{\ep,\eta}+D_pH(x,Du^{\ep,\eta})\cdot Du^{\ep,\eta}
-(a(x)+\eta^2)\Del u^{\ep,\eta}\\
=\ &
D_pH(x,Du^{\ep,\eta})\cdot Du^{\ep,\eta}-H(x,Du^{\ep,\eta}). 
\end{align*}
Multiply the above with $\theta^{\ep,\eta}$ and integrate over $\T^n$ to yield
\begin{align*}
\ep u^{\ep,\eta}(x_0)=&\, 
\int_{\T^n} \left(D_pH(x,Du^{\ep,\eta})\cdot Du^{\ep,\eta}-H(x,Du^{\ep,\eta})\right)\theta^{\ep,\eta}\,dx \nonumber\\
=&\,
\int_{\T^n \times \R^n} (D_pH(x,p)\cdot p -H(x,p))\,d\nu^{\ep,\eta}(x,p). 
\end{align*}
Choose $\ep=\ep_j$, $\eta=\eta_k$, and let $k \to \infty$, $j \to \infty$ 
in this order to derive that
\begin{align*}
0&\,=\int_{\T^n \times \R^n} (D_pH(x,p)\cdot p -H(x,p))\,d\nu(x,p)\\
&\,=
\int_{\T^n \times \R^n} (D_pH(x,D_vL(x,v))\cdot D_vL(x,v) -H(x,D_vL(x,v)))\,d\mu(x,v)\\
&\,=
\int_{\T^n \times \R^n} L(x,v)\,d\mu(x,v)
\end{align*}
by the definition \eqref{def-mu} of $\mu$, and the duality of convex functions. 

Next, to prove (ii), we multiply (AJ)$_\ep^\eta$ with any given $\varphi\in C^2(\T^n)$ and integrate over $\T^n$ to get
\begin{equation*}\label{eq:m3}
\int_{\T^n} \left(D_pH(x,Du^{\ep,\eta})\cdot D\varphi - a(x)\Del \varphi\right)\theta^{\ep,\eta}
=\eta^2 \int_{\T^n} \Del \varphi \theta^{\ep,\eta}\,dx 
+ \ep \varphi(x_0)-\ep \int_{\T^n} \varphi \theta^{\ep,\eta}\,dx.
\end{equation*}
By using \eqref{def-nu-ep} for $\ep=\ep_j$, $\eta=\eta_k$, and 
letting $k\to \infty$, we obtain  
\begin{equation*}\label{eq:m4}
\int_{\T^n \times \R^n} \left(D_pH(x,p)\cdot D\varphi - a(x)\Del \varphi\right) d\nu^{\ep_j}(x,p)=\ep_j \varphi(x_0)-\ep_j \int_{\T^n \times \R^n} \varphi(x)\,d\nu^{\ep_j}(x,p).
\end{equation*}
Send $j \to \infty$ to arrive at the conclusion.
\end{proof}

\begin{rem}
Properties (i), (ii) in Proposition \ref{prop:mather} 
of measure $\mu$ are essential ones to characterize a stochastic Mather measure. This idea was discovered first by Ma\~n\'e \cite{Man}, who relaxed the original idea of Mather \cite{M}. See  Fathi \cite{FaB}, Cagnetti, Gomes and Tran \cite[Theorem 1.3]{CGT1} for some discussion about this.
To be more precise, we can prove that each measure $\mu\in \cM$ defined by \eqref{def-mu} minimizes the action 
\begin{equation}\label{def:stoch-Mather}
\min_{\mu \in \cF} \int_{\T^n\times\R^n}L(x,v)\,d\mu(x,v),
\end{equation}
where 
\[
\cF:=\left\{\mu\in \cP(\T^n \times \R^n)\,:\, \int_{\T^n\times\R^n}(q\cdot D \phi-a(x)\Del\phi)\,d\mu(x,v)=0 
\quad\text{for all} \ \phi\in C^2(\T^n) \right\}. 
\]
Measures belonging to $\cF$ are called \textit{holonomic measures}. When $a\equiv 0$, this is precisely the definition of Mather measures for first order Hamilton--Jacobi equations 
by Ma\~n\'e \cite{Man}. 
When $a \equiv 1$, this coincises with the definition of stochastic Mather measures for viscous Hamilton--Jacobi equations given by Gomes \cite{G1}.

Let us now give a proof of the assertion above.
We use a commutation lemma, Lemma \ref{lem:com}, below. 
For any $\eta>0$, pick $w^\eta, S^\eta$ as defined in Lemma \ref{lem:com}.
For any $\mu\in\cF$, one has
\begin{align*}
\int_{\T^n \times \R^n}S^\eta(x)\,d\mu(x,v)&\geq \, 
\int_{\T^n \times \R^n}(H(x,Dw^\eta)-a(x)\Del w^\eta)\,d\mu(x,v)\\
&\ge\,
\int_{\T^n\times\R^n}\left(-L(x,v)+(v\cdot Dw^\eta-a(x)\Del w^\eta)\right)\,d\mu(x,v)\\
&=\, 
-\int_{\T^n\times\R^n}L(x,v)\,d\mu(x,v). 
\end{align*}
Note that $|S^\eta|\le C$ and $S^\eta \to 0$ pointwise in $\T^n$ as $\eta \to 0$.
Let $\eta \to 0$ and use the Lebesgue dominated convergence theorem to deduce that
\[
\int_{\T^n\times\R^n}L(x,v)\,d\mu(x,v) \geq 0.
\]
Thus, in view of Proposition \ref{prop:mather} (i), we can observe that 
any measure $\mu\in\cM$ minimizes the action \eqref{def:stoch-Mather}. 
\end{rem}

\begin{rem}
We want to address now further important points.
Firstly, $\cM$ is the collection of stochastic Mather measures that can be derived from the solutions of the adjoint equations $\{\theta^{\ep,\eta}\}$. It should be made clear that we do not collect all minimizing measures of \eqref{def:stoch-Mather}
in $\cM$. Also we do not need to use the minimizing properties of stochastic Mather measures \eqref{def:stoch-Mather} in our analysis. Of course we still derived it for the sake of completeness.

Secondly, as we only assume here that $H$ is convex, and not uniformly convex in general, we cannot expect to get deeper properties of Mather measures like Lipschitz graph property and such. It would be extremely interesting to investigate this property for a degenerate viscous Hamilton--Jacobi equation 
in case $H$ is uniformly convex.
\end{rem}

\subsection{Key estimates}\label{subsec:key-est}

\begin{lem}[A commutation lemma]\label{lem:com}
Assume that $w$ is  a viscosity solution of {\rm(E)}.
Let $\gam \in C_c^\infty(\R^n)$ be a standard mollifier such that $\gam \geq 0$,
$\supp \gam\subset\ol{B}(0,1)$ and 
$\|\gam\|_{L^{1}(\R^n)}=1$. 
For each $\eta>0$, set $\gam^\eta(y):=\eta^{-n} \gam(\eta^{-1}y)$ for $y\in \R^n$, and
\begin{equation}\label{func:molli}
w^\eta(x):=\int_{\R^n} \gam^\eta(y) w(x+y)\,dy. 
\end{equation}
There exists a constant $C>0$ and 
a continuous function $S^\eta:\T^n \to \R$ such that 
\begin{equation*}\label{prop-com}
|S^\eta(x)| \leq C \quad \text{and} \quad \lim_{\eta \to 0} S^\eta(x)=0,
\quad
\text{for each} \ x\in \T^n, 
\end{equation*}
and
\begin{equation*}\label{eq:approx}
H(x,Dw^\eta) \leq a(x) \Del w^\eta+ S^\eta(x)\qquad \text{in} \ \T^n. 
\end{equation*}
Moreover, $|\eta^2\Del w^{\eta}|\le C\eta$. 
\end{lem}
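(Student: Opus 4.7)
The plan is to combine Jensen's inequality (using the convexity of $H$), a Gronwall bound coming from (H1), and a commutator-style argument exploiting the viscosity subsolution property of $w$ together with the $C^2$-regularity of the nonnegative diffusion $a$.

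The last assertion $|\eta^2 \Delta w^\eta| \le C\eta$ is immediate: since $w$ is Lipschitz by the Bernstein estimate \eqref{intro:apriori}, one integration by parts gives
\[
\Delta w^\eta(x) = \int_{\R^n} D\gamma^\eta(y) \cdot Dw(x+y)\,dy,
\]
whence $\|\Delta w^\eta\|_\infty \le \|D\gamma^\eta\|_{L^1}\|Dw\|_\infty = O(\eta^{-1})$.

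For the main inequality, I would first apply Jensen's inequality to $p \mapsto H(x,p)$ and obtain $H(x, Dw^\eta(x)) \le \int \gamma^\eta(y) H(x, Dw(x+y))\,dy$, then use the growth condition $|D_x H| \le C(1+H)$ from (H1) combined with the boundedness of $Dw$ and a Gronwall argument to replace the inner $x$ by $x+y$ up to an $O(\eta)$ error:
\[
H(x, Dw^\eta(x)) \le \int \gamma^\eta(y) H(x+y, Dw(x+y))\,dy + C\eta.
\]
The heart of the proof is to dominate this right-hand side by $a(x)\Delta w^\eta(x) + S^\eta(x)$. Formally, since $w$ satisfies $H(y, Dw(y)) = a(y)\Delta w(y)$, the residual is the commutator $\int \gamma^\eta(x-y)[a(y)-a(x)]\Delta w(y)\,dy$. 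To make the formal computation rigorous for the Lipschitz viscosity subsolution $w$, I would first regularize by the sup-convolution $w^\alpha$, which is semiconvex and hence has pointwise second derivatives almost everywhere, and is an a.e.\ classical subsolution up to an $\alpha$-perturbation; then test its subsolution inequality against the smooth nonnegative $\phi(y) := \gamma^\eta(x-y)$ and integrate by parts once (legitimate, as only $Dw^\alpha \in L^\infty$ is involved); split $a(y) = a(x) + (a(y)-a(x))$ to extract the principal term $a(x)\Delta w^\eta(x)$; and finally pass to the limits $\alpha \to 0$ and then $\eta \to 0$.

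The principal obstacle is showing that the residual commutator $S^\eta(x)$ is uniformly bounded and vanishes pointwise as $\eta \to 0$. The difficulty is that $\Delta w$ is only a distribution in general, so the na\"ive expression $(a(y)-a(x))\Delta w(y)$ has no direct meaning on the degeneracy set $\{a = 0\}$. I would overcome this by exploiting two structural facts: first, by the viscosity equation $a\Delta w = H(\cdot, Dw)$ is a bounded measurable function, so the commutator can be rewritten on $\{a(y) > 0\}$ as $\tfrac{a(y)-a(x)}{a(y)} H(y, Dw(y))$; second, the key bound $|Da|^2 \le C a$ valid for any nonnegative $C^2$ function (equivalently, $\sqrt{a}$ is Lipschitz and $Da = 0$ on $\{a = 0\}$) quantifies how small $|a(y)-a(x)|$ is relative to $a(y)$ on the support of $\gamma^\eta$. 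Together these ingredients, in the spirit of the DiPerna--Lions commutation lemmas cited in the Introduction, are expected to deliver $|S^\eta| \le C$ and $S^\eta(x) \to 0$ pointwise. The pointwise (as opposed to uniform) character of this convergence is sharp and matches precisely the subtlety flagged in the Remark that forbids $C^{1,1}$ test functions in Proposition~\ref{prop:mather}(ii).
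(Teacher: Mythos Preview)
Your proposal is correct and takes essentially the same route as the paper: sup-convolution to pass to a distributional subsolution, Jensen's inequality for the Hamiltonian commutator $R_1^\eta$ (Gronwall is unnecessary once $Dw$ is bounded), and the two structural facts $\|a\Delta w\|_{L^\infty}\le C$ and $|Da|^2\le Ca$ for the diffusion commutator $R_2^\eta$. The only organizational difference is that the paper handles the pointwise convergence of $R_2^\eta$ by a clean dichotomy on $a(x)$ --- when $a(x)>0$ one has $\Delta w$ locally bounded and the commutator is $O_x(\eta)$, while when $a(x)=0$ one integrates by parts and uses $Da(x)=0$ to get $O(\eta)$ --- whereas your ratio rewriting $\tfrac{a(y)-a(x)}{a(y)}(a\Delta w)(y)$ is precisely what the paper uses afterward to upgrade to the uniform rate $\eta^{1/2}$ in Lemma~\ref{lem:unif}.
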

We postpone the proof of the commutation lemma to the next section. Let us however mention here that this is a technical result but is very important in our analysis. Indeed, for each solution $w$ of (E) with some a priori bounds, we can construct a family of smooth approximated  subsolutions $\{w^\eta\}$ of (E). In particular, for any $\eta>0$, $w^\eta$ is $C^2$, which is good enough for us to use  as test functions in Proposition \ref{prop:mather} (ii). It is well-known that we can perform sup-convolutions of $w$, which was discovered by Jensen \cite{Je}, to derive semi-convex approximated subsolutions of (E), but these are not smooth enough to use as test functions (see Remark 1). 
We also want to mention that a similar result was already discovered a long time ago by Lions \cite{L81}.
However, Lions only got convergence to $0$ of $S^\eta$ in the \textit{almost everywhere} sense, which is not enough for our purpose.
The delicate point here is that, as each Mather measure $\mu$ can be very singular in $\T^n \times \R^n$, we need to have the convergence of $S^\eta$ everywhere. Moreover, we  can actually show that $S^\eta$ converges to $0$ uniformly on $\T^n$ with convergence rate $\eta^{1/2}$, which is necessary to prove Theorem \ref{thm:main}.  

\begin{lem}[Uniform convergence] \label{lem:unif}
There exists a universal constant $C>0$ such that $\|S^\eta\|_{L^\infty(\T^n)} \leq C \eta^{1/2}$.
\end{lem}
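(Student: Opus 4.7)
The plan is to refine the pointwise convergence $S^\eta(x)\to 0$ of Lemma~\ref{lem:com} into the uniform rate $\|S^\eta\|_{L^\infty(\T^n)}\le C\eta^{1/2}$ by tracking the $\eta$-dependence of every error term arising in the construction of $S^\eta$. Throughout, I rely on the a priori Lipschitz bound $\|Dw\|_{L^\infty(\T^n)}\le C$ from \eqref{intro:apriori} and on $w$ being a viscosity solution of (E).

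The natural decomposition is $S^\eta(x)=H(x,Dw^\eta(x))-a(x)\Del w^\eta(x)$, split into three sources of error. By convexity of $p\mapsto H(x,p)$ and Jensen's inequality,
\[
H(x,Dw^\eta(x))\;\le\;\int_{\R^n}\gam^\eta(y)\,H(x,Dw(x+y))\,dy.
\]
Using (H1) and the fact that $H(\cdot,Dw)\in L^\infty(\T^n)$, the mismatch $H(x,Dw(x+y))-H(x+y,Dw(x+y))$ is bounded by $C\eta$ for $|y|\le\eta$, giving an $O(\eta)$ contribution. The viscosity subsolution inequality, integrated against $\gam^\eta$ exactly as in the proof of Lemma~\ref{lem:com}, replaces $H(x+y,Dw(x+y))$ by $a(x+y)\Del w(x+y)$, so that bounding $S^\eta$ reduces to bounding the commutator
\[
I(x):=\int_{\R^n}\gam^\eta(y)\bigl[a(x+y)-a(x)\bigr]\Del w(x+y)\,dy.
\]

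For $I(x)$, I would Taylor-expand $a(x+y)=a(x)+Da(x)\cdot y+O(|y|^2)$ and integrate by parts in $y$ to transfer derivatives off of $\Del w$ onto the smooth factors. The $O(|y|^2)$ remainder contributes $O(\eta)$ after two integrations by parts, using $\|\Del_y[\gam^\eta(y)|y|^2]\|_{L^1(\R^n)}=O(1)$ paired against $w\in W^{1,\infty}(\T^n)$. The delicate piece is the first-order term $Da(x)\cdot\int\gam^\eta(y)\,y\,\Del w(x+y)\,dy$: after symmetry cancellations the $y$-integral is uniformly bounded, and the prefactor $|Da(x)|$ is controlled by the structural inequality
\[
|Da(x)|^2\;\le\;2\,\|D^2 a\|_{L^\infty(\T^n)}\,a(x),\qquad x\in\T^n,
\]
a standard consequence of Taylor's theorem applied to any nonnegative $a\in C^2(\T^n)$. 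This yields $|Da(x)|\le C\sqrt{a(x)}$, and a regime split produces the $\eta^{1/2}$ rate: on $\{a(x)<\eta\}$ one has $|Da(x)|\le C\eta^{1/2}$ directly, while on $\{a(x)\ge\eta\}$ the equation $H=a\Del w$ allows $a(x)\Del w^\eta(x)$ to absorb the commutator via a Cauchy--Schwarz argument against $\gam^\eta$.

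The main obstacle is controlling $I(x)$ in the degenerate regime: $\Del w$ is only a distribution and $a$ may vanish, so the only available mechanism that balances these is the bound $|Da|^2\le C\,a$ inherited from $a\in C^2(\T^n)$, $a\ge 0$. The exponent $1/2$ emerges naturally from this $\sqrt{a}$-type bound, while all other error sources—Jensen, the $x$-variation in $H$, and the Taylor remainder—contribute only $O(\eta)$, which is more than sufficient.
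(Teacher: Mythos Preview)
Your approach is essentially the paper's: both reduce $S^\eta$ to the commutator $I(x)=\int\gam^\eta(y)[a(x+y)-a(x)]\Del w(x+y)\,dy$, invoke the structural inequality $|Da|^2\le C\,a$ for nonnegative $C^2$ functions, split into a small-$a$ and a large-$a$ regime, and in the latter exploit the equation bound $\|a\Del w\|_{L^\infty}\le C$ from~\eqref{class-bdd}. Two small adjustments are needed. First, the split should be on $\min_{B(x,\eta)}a$ rather than $a(x)$: otherwise, in your ``large-$a$'' regime, $a(x+y)$ may still vanish for some $|y|\le\eta$, and you cannot divide by it. Second, in that regime the argument is not Cauchy--Schwarz but a direct bound: write
\[
[a(x+y)-a(x)]\Del w(x+y)=\frac{a(x+y)-a(x)}{a(x+y)}\cdot a(x+y)\Del w(x+y),
\]
use $|a\Del w|\le C$ and $|a(x+y)-a(x)|\le C|y|\sqrt{a(x+y)}$ (from $|Da|\le C\sqrt a$) to get $|I(x)|\le C\int\gam^\eta(y)\,|y|/\sqrt{a(x+y)}\,dy\le C\eta^{1/2}$. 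Your Taylor-expansion treatment of the small-$a$ regime is a mild repackaging of the paper's single integration-by-parts computation there (one IBP suffices, not two).
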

The proof of this Lemma is also postponed to the next section.
The two following results provide the key estimates for our purpose, which 
are analogies of \cite[Lemma 5.4, Proposition 5.2]{DFIZ}. 

\begin{lem}\label{lem:key2}
Let $w \in C(\T^n)$ be any solution of {\rm(E)}, and 
$w^{\eta}$ be the function given by \eqref{func:molli} for $\eta>0$. 
Then, 
\begin{equation}\label{ineq:key2}
u^{\ep,\eta}(x_0) 
\geq w^\eta(x_0)-\int_{\T^n} w^\eta \theta^{\ep,\eta}\,dx-\frac{C\eta}{\ep}-\frac{1}{\ep}\int_{\T^n} S^\eta\theta^{\ep,\eta}\,dx. 
\end{equation}
\end{lem}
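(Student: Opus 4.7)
The plan is to compare $u^{\ep,\eta}$ with the smooth approximate subsolution $w^\eta$ supplied by the commutation lemma, and to integrate the resulting inequality against the adjoint density $\theta^{\ep,\eta}$, thereby localizing everything at $x_0$. The virtue of $w^\eta$ is that it is $C^2$ and satisfies $H(x,Dw^\eta)\le a(x)\Del w^\eta+S^\eta(x)$ together with the quantitative bound $|\eta^2\Del w^\eta|\le C\eta$; this makes it legitimate to pair with $\theta^{\ep,\eta}$ through integration by parts and ensures that the extra $\eta^2$-viscosity in ${\rm (E)}_\ep^\eta$ contributes only an $O(\eta)$ error.

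First, I would subtract the subsolution inequality for $w^\eta$ from the equation ${\rm (E)}_\ep^\eta$ satisfied by $u^{\ep,\eta}$, and exploit the convexity of $p\mapsto H(x,p)$ in the form
\[ H(x,Du^{\ep,\eta})-H(x,Dw^\eta)\le D_pH(x,Du^{\ep,\eta})\cdot(Du^{\ep,\eta}-Dw^\eta), \]
to obtain the pointwise linear inequality for $v:=u^{\ep,\eta}-w^\eta$,
\[ \ep v+D_pH(x,Du^{\ep,\eta})\cdot Dv-(a(x)+\eta^2)\Del v\ \geq\ \eta^2\Del w^\eta-\ep w^\eta-S^\eta \qquad\text{in } \T^n. \]

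Next, I would multiply this inequality by $\theta^{\ep,\eta}\ge 0$, integrate over $\T^n$, and transfer the derivatives from $v$ onto $\theta^{\ep,\eta}$ via integration by parts (there are no boundary terms on the torus). By the very construction of the adjoint equation ${\rm (AJ)}_\ep^\eta$, the left-hand side collapses to
\[ \int_{\T^n} v\bigl[\ep\theta^{\ep,\eta}-\text{div}(D_pH(x,Du^{\ep,\eta})\theta^{\ep,\eta})-\Del(a\theta^{\ep,\eta})-\eta^2\Del\theta^{\ep,\eta}\bigr]\,dx=\ep\, v(x_0). \]
After dividing by $\ep$ and rearranging I would arrive at
\[ u^{\ep,\eta}(x_0)\ \geq\ w^\eta(x_0)-\int_{\T^n}w^\eta\theta^{\ep,\eta}\,dx+\frac{\eta^2}{\ep}\int_{\T^n}\Del w^\eta\,\theta^{\ep,\eta}\,dx-\frac{1}{\ep}\int_{\T^n}S^\eta\theta^{\ep,\eta}\,dx, \]
and the bound $|\eta^2\Del w^\eta|\le C\eta$ together with $\|\theta^{\ep,\eta}\|_{L^1(\T^n)}=1$ absorbs the third term into a $-C\eta/\ep$ error, yielding \eqref{ineq:key2}.

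The main obstacle is not this calculation, which is a textbook nonlinear-adjoint duality, but rather the earlier requirement of having an honestly $C^2$ approximate subsolution whose Laplacian grows slowly enough that $\eta^2\Del w^\eta=O(\eta)$. Standard sup-convolutions produce only semiconvex subsolutions, which are insufficient because the stochastic Mather measures underlying the adjoint procedure can be too singular to tolerate jumps of $\Del w^\eta$. This is precisely what Lemma \ref{lem:com} is engineered to provide, and it also explains the specific $\eta^2$ scaling built into both ${\rm (E)}_\ep^\eta$ and ${\rm (AJ)}_\ep^\eta$.
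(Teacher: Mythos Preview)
Your proposal is correct and follows essentially the same approach as the paper: both subtract the approximate subsolution inequality for $w^\eta$ from ${\rm (E)}_\ep^\eta$, use the convexity of $H$ to linearize the difference at $Du^{\ep,\eta}$, and then integrate against $\theta^{\ep,\eta}$ so that the adjoint equation collapses the left-hand side to $\ep(u^{\ep,\eta}-w^\eta)(x_0)$. The only cosmetic difference is that the paper absorbs $\eta^2\Del w^\eta$ into a $C\eta$ term at the outset (writing $H(x,Dw^\eta)\le (a(x)+\eta^2)\Del w^\eta+C\eta+S^\eta$), whereas you carry it through and apply $|\eta^2\Del w^\eta|\le C\eta$ at the end.
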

\begin{proof}
In view of Lemma \ref{lem:com}, it is clear that $w^{\eta}$ satisfies 
\begin{equation*}\label{w-ep}
H(x,Dw^\eta) \leq (a(x)+\eta^2) \Del w^\eta +C \eta+S^\eta(x) \qquad \text{in} \ \T^n. 
\end{equation*}
We subtract (E)$_\ep^\eta$ from the above to get
\begin{align*}
&\ep w^\eta+C \eta+S^\eta(x)\\
\ge&\, 
\ep(w^\eta-u^{\ep,\eta})+H(x,Dw^\eta)-H(x,Du^{\ep,\eta})-(a(x)+\eta^2) \Del(w^\eta-u^{\ep,\eta})\\
\ge&\, 
\ep(w^\eta-u^{\ep,\eta})+D_pH(x,Du^{\ep,\eta})\cdot D(w^\eta-u^{\ep,\eta})-(a(x)+\eta^2) \Del(w^\eta-u^{\ep,\eta}), 
\end{align*}
where we used the convexity of $H$ in the last inequality. 

Multiplying this with $\theta^{\ep,\eta}$, integrating on $\T^n$, 
and using the integral by parts, we get 
\[
\int_{\T^n} \ep w^\eta \theta^{\ep,\eta}\,dx
+C \eta+\int_{\T^n}S^\eta(x) \theta^{\ep,\eta}\,dx
\ge 
\ep (w^\eta-u^{\ep,\eta})(x_0). 
\]
Rearrange this to arrive at the conclusion. 
\end{proof}

\begin{prop}\label{prop:key1}
Let $u^{\ep}$ be the solution of {\rm(E)}$_{\ep}$, and $\mu \in \cM$. Then, 
\[
\int_{\T^n\times \R^n}u^{\ep}(x)\,d\mu(x,v)\le 0 \quad
\text{for any} \  \ep>0.  
\]
\end{prop}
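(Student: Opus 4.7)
The plan is to test Proposition \ref{prop:mather} against a smooth approximation of $u^\ep$, combining Legendre duality with a discounted analogue of the commutation lemma to produce the desired inequality.

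First, I would regularize $u^\ep$ exactly as in \eqref{func:molli}: set $\tilde u^\eta(x):=\int_{\R^n}\gam^\eta(y)\,u^\ep(x+y)\,dy$. The Bernstein bound \eqref{intro:apriori} (which is uniform in $\ep$) guarantees that $\tilde u^\eta\in C^\infty(\T^n)$, that $\tilde u^\eta\to u^\ep$ uniformly on $\T^n$ as $\eta\to 0$, and that $|\eta^2\Del\tilde u^\eta|\le C\eta$.

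Next, I would establish the following discounted analogue of Lemma \ref{lem:com}: there exists a continuous $S^\eta:\T^n\to\R$ with $|S^\eta|\le C$ and $\|S^\eta\|_{L^\infty(\T^n)}\to 0$ as $\eta\to 0$ such that
\begin{equation*}
\ep \tilde u^\eta + H(x, D\tilde u^\eta) \le a(x)\,\Del \tilde u^\eta + S^\eta(x)\qquad \text{in }\T^n.
\end{equation*}
Since $u^\ep$ is continuous, mollifying the zeroth-order term $\ep u^\ep$ produces $\ep\tilde u^\eta$ without any commutator error, so the only genuine commutator is the one on $a(x)\Del u^\ep$, which is precisely the quantity controlled by Lemmas \ref{lem:com} and \ref{lem:unif}. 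This adaptation is therefore routine.

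By the Legendre duality $v\cdot p\le H(x,p)+L(x,v)$ applied at $p=D\tilde u^\eta(x)$, and inserting the subsolution inequality above, one obtains pointwise on $\T^n\times\R^n$
\begin{equation*}
\ep\tilde u^\eta(x)+v\cdot D\tilde u^\eta(x)-a(x)\Del\tilde u^\eta(x)\le L(x,v)+S^\eta(x).
\end{equation*}
Integrating against $\mu\in\cM$ and invoking Proposition \ref{prop:mather}(i) (which gives $\int L\,d\mu=0$) together with Proposition \ref{prop:mather}(ii) applied to the $C^2$ test function $\tilde u^\eta$ (which gives $\int(v\cdot D\tilde u^\eta-a(x)\Del\tilde u^\eta)\,d\mu=0$) yields
\begin{equation*}
\ep\int_{\T^n\times\R^n}\tilde u^\eta(x)\,d\mu(x,v)\le \int_{\T^n\times\R^n}S^\eta(x)\,d\mu(x,v).
\end{equation*}
Sending $\eta\to 0$, the left-hand side converges to $\ep\int u^\ep\,d\mu$ by uniform convergence of $\tilde u^\eta$, while the right-hand side tends to $0$ by uniform convergence of $S^\eta$; dividing by $\ep>0$ gives the claim.

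The main obstacle is the discounted commutation estimate itself. Although Lemma \ref{lem:com} is stated only for solutions of (E), its proof should adapt verbatim because the additional $\ep u^\ep$ term is continuous and contributes nothing beyond the trivial identity $\gam^\eta\ast(\ep u^\ep)=\ep\tilde u^\eta$; the delicate commutator on $a(x)\Del u^\ep$ is identical. Two points deserve emphasis: $\tilde u^\eta$ must be genuinely $C^2$ (not merely $C^{1,1}$) in order to apply Proposition \ref{prop:mather}(ii), as stressed in Remark 1, and $S^\eta$ must converge to $0$ everywhere (not just almost everywhere), so that $\int S^\eta\,d\mu\to 0$ against the potentially singular measure $\mu$.
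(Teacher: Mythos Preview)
Your proposal is correct and follows essentially the same route as the paper: mollify $u^\ep$, invoke the (discounted) commutation lemma to obtain a smooth approximate subsolution, pass to the Legendre dual, and integrate against $\mu$ using both parts of Proposition~\ref{prop:mather}. The only cosmetic differences are that the paper keeps the zeroth-order term as $\ep u^\ep$ rather than $\ep\tilde u^\eta$ (the discrepancy is $O(\ep\eta)$ and harmless) and passes to the limit via pointwise convergence of $S^\eta$ plus dominated convergence rather than the uniform bound of Lemma~\ref{lem:unif}.
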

\begin{proof}
For each $\eta>0$, define
\[
\psi^\eta(x):=\int_{\R^n} \gam^\eta(y) u^\ep(x+y)\,dy.
\]
By Lemma \ref{lem:com}, 
\[
\ep u^\ep + H(x,D\psi^\eta)-a(x)\Del \psi^\eta \leq S^\eta(x),
\]
where $|S^\eta(x)| \leq C$ in $\T^n$ for some $C>0$ independent of $\eta$,
and $S^\eta \to 0$ pointwise in $\T^n$ as $\eta \to 0$.

By the convexity of $H$, we have, for any $v\in \R^n$,
\begin{equation*}\label{transform-eta}
\ep u^\ep + v \cdot D\psi^\eta - L(x,v)-a(x)\Del \psi^\eta \leq S^\eta(x).
\end{equation*}
Thus, in light of properties (i), (ii) in Proposition \ref{prop:mather} 
of $\mu$, we yield that
$$
\int_{\T^n \times \R^n} \ep u^\ep\,d\mu(x,v) \leq \int_{\T^n \times \R^n} S^\eta(x)\,d\mu(x,v).
$$
Let $\eta \to 0$ and use the Lebesgue dominated convergence theorem that  to achieve the desired result.
\end{proof}

\section{Proof of the commutation lemma}\label{sec:com}

We first show that, $w$ is actually a subsolution of (E) in the distributional sense 
based on the ideas in \cite{Je,JLS}.
For each $\del>0$, let $w^\del$ be the sup-convolution of $w$, i.e., 
\[
w^\del(x):=\sup_{y\in \R^n} \left(w(y)-\frac{|x-y|^2}{2\del}\right).
\]
It is clear from \cite{Je, JLS, CIL} that $w^\del$ is semi-convex and $w^\del$ is a viscosity subsolution of
\begin{equation}\label{eqn-w-del}
H(x,Dw^\del) \leq a(x)\Del w^\del +\om(\del) \qquad \text{in} \ \T^n,
\end{equation}
where $\om:(0,\infty) \to \R$ is a modulus of continuity, i.e., $\lim_{\del \to 0} \om(\del)=0$.
Since $w^\del$ is a semi-convex function to satisfy \eqref{eqn-w-del}, it is twice differentiable almost everywhere and thus is also a distributional solution of \eqref{eqn-w-del}.
Indeed, by passing to a subsequence if necessary, we have
\begin{align*}
&w^\del \to w  &&\text{uniformly in} \ \T^n,\\
&Dw^\del \stackrel{*}\rightharpoonup Dw  &&\text{weakly in} \  L^\infty(\T^n).
\end{align*}
For any test function $\phi \in C^2(\T^n)$ with $\phi \geq 0$, by convexity of $H$, one obtains that
\begin{align*}
&\int_{\T^n} \left ( H(x,Dw)\phi - w \Del(a(x)\phi)\right)\,dx\\
=\ &\lim_{\del \to 0} \int_{\T^n} \left( H(x,Dw) \phi +D_p H(x,Dw)\cdot D(w^\del-w) \phi - w^\del \Del(a(x)\phi)\right)\,dx\\
\leq \ &
\lim_{\del \to 0} \int_{\T^n} \big( H(x,Dw^\del) - a\Del w^\del\big)\phi\,dx 
\leq \lim_{\del \to 0} \int_{\T^n} \om(\del)\phi\,dx=0.
\end{align*}
This confirms that $w$ is a subsolution of (E) in the distributional sense.

Set 
\begin{align*}
R_1^\eta(x)&:=H(x,Dw^\eta(x))-\int_{\R^n} H(x+y,Dw(x+y)) \gam^\eta(y)\,dy,\\
R_2^\eta(x)&:=\int_{\R^n} a(x+y)\Del w(x+y) \gam^\eta(y)\,dy- a(x) \Del w^\eta(x).
\end{align*}
In light of the above assertion that $w$ is a distributional subsolution of (E), it is clear that 
$$
H(x,Dw^\eta) \leq  a(x)\Del w^\eta +R_1^\eta(x)+R_2^\eta(x) \qquad \text{in}\ \T^n.
$$
We now need to estimate $R_1^\eta$ and $R_2^\eta$.

Before giving the estimate for $R^{\eta}_1, R^{\eta}_2$, we observe an important a priori estimate of viscosity solutions to (E). In view of \cite[Theorem 3.1]{AT}, we have 
a Lipschitz estimate for all of viscosity solutions to (E).  
Therefore, we have 
\[
-C\le-a(x)\Del w \le C \quad\text{in the viscosity sense},
\] 
for some $C>0$. 
Then by using the result of equivalence of viscosity solutions 
and solutions in the distribution sense by Ishii \cite{I}, and also a simple 
structure of diffusion, we have  
\begin{equation}\label{class-bdd}
\|Dw\|_{L^\infty(\T^n)} + \|a\Del w\|_{L^\infty(\T^n)} \leq C 
\end{equation}
for some constant $C>0$.

\begin{lem}\label{lem:R1}
We have $R_1^{\eta}(x)\le C\eta$ for all $x\in\T^n$ and $\eta>0$, where $C>0$ is  some sufficiently large constant independent of $\eta$.
\end{lem}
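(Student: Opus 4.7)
The plan is to exploit two features simultaneously: the convexity of $H$ in $p$ (to push the mollification inside $H$), and the $x$-regularity bound $|D_x H(x,p)| \le C(1+H(x,p))$ from (H1) together with the Lipschitz estimate \eqref{class-bdd}.

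First I would note that since $w \in W^{1,\infty}(\T^n)$ by \eqref{class-bdd}, we may legitimately differentiate under the integral sign to obtain
\[
Dw^\eta(x) = \int_{\R^n} \gam^\eta(y)\,Dw(x+y)\,dy,
\]
i.e.\ $Dw^\eta$ is the standard mollification of $Dw$. Since $\gam^\eta \ge 0$ and $\int \gam^\eta = 1$, Jensen's inequality applied to the convex map $p\mapsto H(x,p)$ yields
\[
H(x,Dw^\eta(x)) \le \int_{\R^n} \gam^\eta(y)\,H(x,Dw(x+y))\,dy.
\]
Subtracting the definition of $R_1^\eta$, we get
\[
R_1^\eta(x) \le \int_{\R^n} \gam^\eta(y)\bigl[H(x,Dw(x+y)) - H(x+y,Dw(x+y))\bigr]\,dy.
\]

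Next I would bound the bracketed difference. By \eqref{class-bdd} we have $\|Dw\|_{L^\infty(\T^n)} \le C$, hence there is a compact set $K\subset\R^n$ such that $Dw(x+y)\in K$ for a.e.\ $x,y$. Since $H$ is continuous on $\T^n\times K$, it is bounded there by some constant $M$. Then the first condition in (H1) gives
\[
|D_x H(z, p)| \le C(1+H(z,p)) \le C(1+M)
\]
for every $z\in\T^n$ and every $p\in K$. Applying the mean value theorem along the segment from $x$ to $x+y$ with $p=Dw(x+y)$ fixed,
\[
|H(x,p) - H(x+y,p)| \le C|y|
\]
for a.e.\ such $p$. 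Since $\supp \gam^\eta \subset \overline{B}(0,\eta)$, we have $|y|\le \eta$ on the support of the integrand, so
\[
R_1^\eta(x) \le C\eta \int_{\R^n}\gam^\eta(y)\,dy = C\eta,
\]
which is the claim.

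The only subtle step is the chain ``$Dw$ bounded $\Rightarrow$ $H(x,Dw(x+y))$ bounded $\Rightarrow$ $|D_xH|$ bounded on the relevant set''; without the Lipschitz a priori estimate \eqref{class-bdd} one could not turn the first part of (H1) into a genuine Lipschitz-in-$x$ bound for $H$, and the argument would have to carry an $H$-weighted error. All the rest is Jensen plus a mean value estimate.
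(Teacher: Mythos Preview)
Your proof is correct and follows essentially the same route as the paper: Jensen's inequality for the convex map $p\mapsto H(x,p)$ combined with the Lipschitz-in-$x$ bound on $H$ coming from $\|Dw\|_{L^\infty}\le C$. The only cosmetic difference is the order of the two steps and that you justify the $x$-Lipschitz bound explicitly via (H1), whereas the paper simply appeals to \eqref{class-bdd} and the $C^2$ regularity of $H$ on the compact set $\T^n\times\{|p|\le C\}$.
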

\begin{proof}
In view of \eqref{class-bdd},
\[
|H(x+y,Dw(x+y))-H(x,Dw(x+y))|\le C\eta \quad 
\text{for a.e.} \ y\in B(x,\eta). 
\]
Thus, by convexity of $H$ and Jensen's inequality, 
one can easily obtain 
\begin{align*}
R_1^{\eta}(x)  
\leq&\, 
H\left(x,\int_{\R^n} \gam^\eta(y)Dw(x+y)\,dy\right) - \int_{\R^n} H(x,Dw(x+y))\gam^\eta(y)\,dy+C \eta\\
\leq&\, 
C\eta.\qedhere
\end{align*}
\end{proof}

\begin{lem}\label{lem:R2}
There exists 
a constant $C>0$ independent of $\eta$ such that
$|R_{2}^{\eta}(x)| \leq C$ for all $x\in\T^n$ and $\eta>0$.
Moreover, $\lim_{\eta \to 0} R_{2}^{\eta}(x)=0$ for each $x\in \T^n$. 
\end{lem}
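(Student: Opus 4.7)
My plan is as follows. Using $\Del w^{\eta}(x)=\int_{\R^n}\gam^{\eta}(y)\Del w(x+y)\,dy$ understood distributionally, I combine the two terms in $R_2^{\eta}$ to get
\[
R_2^{\eta}(x)=\int_{\R^n}\bigl(a(x+y)-a(x)\bigr)\gam^{\eta}(y)\Del w(x+y)\,dy.
\]
Since the test function $y\mapsto (a(x+y)-a(x))\gam^{\eta}(y)$ is $C^2$ and compactly supported in $B(0,\eta)$ and $w\in W^{1,\infty}(\T^n)$ by \eqref{class-bdd}, one integration by parts in $y$ yields
\begin{equation}\label{eq:planR2IBP}
R_2^{\eta}(x)=-\int_{\R^n} Da(x+y)\gam^{\eta}(y)\cdot Dw(x+y)\,dy-\int_{\R^n}\bigl(a(x+y)-a(x)\bigr)\nabla\gam^{\eta}(y)\cdot Dw(x+y)\,dy.
\end{equation}
The uniform bound $|R_2^{\eta}(x)|\le C$ follows immediately: the first integral is controlled by $\|Da\|_{L^\infty}\|Dw\|_{L^\infty}$, and for the second, $|a(x+y)-a(x)|\le\|Da\|_{L^\infty}\eta$ on $\supp\gam^{\eta}$ combines with $\|\nabla\gam^{\eta}\|_{L^1(\R^n)}\le C\eta^{-1}$ to produce an $O(1)$ bound.

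For the pointwise limit I split on the value of $a(x)$. If $a(x)>0$, continuity of $a$ gives $r,\kap>0$ with $a\ge\kap$ on $\overline{B(x,r)}$. Combining \eqref{class-bdd} with the distributional identity $a\Del w=H(\cdot,Dw)$ yields $\Del w=H(\cdot,Dw)/a\in L^\infty(B(x,r))$, so that for $\eta<r$ a direct estimate, without any integration by parts, gives
\[
|R_2^{\eta}(x)|\le\sup_{|y|\le\eta}|a(x+y)-a(x)|\cdot\|\Del w\|_{L^{\infty}(B(x,r))}\le C(x)\,\eta\longrightarrow 0.
\]
If instead $a(x)=0$, then $x$ is a minimum of $a\ge 0$, hence $Da(x)=0$; Taylor's theorem then gives $|Da(x+y)|\le C|y|$ and $0\le a(x+y)\le C|y|^2$ on $B(0,\eta)$. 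Inserting these into \eqref{eq:planR2IBP} and using $\|\nabla\gam^{\eta}\|_{L^1}\le C\eta^{-1}$ shows that each of the two terms is $O(\eta)$, so $R_2^{\eta}(x)\to 0$.

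The main obstacle is the pointwise (not almost everywhere) convergence requirement, which prevents a direct appeal to Lebesgue differentiation applied to $\gam^\eta\ast(a\Del w)$. The constant $C(x)$ in the first-case estimate blows up as $a(x)\to 0^{+}$, so a single uniform argument cannot furnish the limit; the case split is essentially forced by the degeneracy, and the key algebraic input is that nonnegativity of $a\in C^2$ automatically makes $Da$ vanish on the zero set of $a$.
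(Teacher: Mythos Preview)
Your proof is correct and follows essentially the same route as the paper: you combine the two terms into $\int (a(x+y)-a(x))\gam^\eta(y)\Del w(x+y)\,dy$, integrate by parts once in $y$ to obtain the two-term expression \eqref{eq:planR2IBP}, deduce the uniform bound from $|a(x+y)-a(x)|\le C|y|$ together with $\|\nabla\gam^\eta\|_{L^1}\le C\eta^{-1}$, and then prove pointwise convergence by the same case split on $a(x)>0$ versus $a(x)=0$, using in the first case the local $L^\infty$ bound on $\Del w$ from \eqref{class-bdd} and in the second case the vanishing of $Da(x)$ at a minimum of $a$. The only cosmetic difference is that the paper invokes \eqref{class-bdd} directly (i.e., $\|a\Del w\|_{L^\infty}\le C$) rather than the distributional identity $a\Del w=H(\cdot,Dw)$ to bound $\Del w$ where $a\ge\kap>0$, but the resulting estimate $|R_2^\eta(x)|\le C(x)\eta$ is the same.
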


\begin{proof}
We first calculate, for every $x\in \T^n$, 
\[
|\Del w^{\eta}(x)|\le\int_{\R^n}|D\gam^{\eta}(y)\cdot Dw(x+y)|\,dy
\le 
\frac{C}{\eta^{n+1}}\int_{\R^n}|D\gam(\frac{y}{\eta})|\,dy
=
\frac{C}{\eta}\int_{\R^n}|D\gam(z)|\,dz\le\frac{C}{\eta},
\]
which immediately implies $\eta^2|\Del w^{\eta}|\le C\eta$. 

We next show the boundedness of $R_2^\eta$ by the following simple computations:
\begin{align*}
&|R_2^\eta(x)| = \left| \int_{\R^n} (a(x+y)-a(x)) \Del w(x+y) \gam^\eta(y)\,dy\right|\\
=\ &\left| \int_{\R^n}\gam^\eta(y) Da(x+y) \cdot Dw(x+y)\,dy + \int_{\R^n} (a(x+y)-a(x))Dw(x+y)\cdot D\gam^\eta(y) \,dy\right|\\
\leq\ & C \int_{\R^n} \left(\gam^\eta(y)+ |y| |D\gam^\eta(y)|\right)\,dy \leq C.
\end{align*}

We finally prove that $\lim_{\eta \to 0} R_2^\eta(x) =0$ for each $x\in \T^n$. 
We consider two cases: (i) $a(x)=0$, (ii) $a(x)>0$.

In case (i), noting that $a(x)=0=\min_{\T^n} a$, we also have $Da(x)=0$. 
Therefore,
\begin{align*}
&|R_2^\eta(x)| = \left| \int_{\R^n} a(x+y) \Del w(x+y) \gam^\eta(y)\,dy\right|\\
= \ & \left| \int_{\R^n} Dw(x+y)\cdot Da(x+y) \gam^\eta(y)\,dy + \int_{\R^n} Dw(x+y)\cdot D\gam^\eta(y) a(x+y)\,dy \right|\\
\leq \ & C \int_{\R^n} \left( |Da(x+y)| \gam^\eta(y)+ a(x+y)|D\gam^\eta(y)|\right)\,dy\\
=\ & C\int_{\R^n} \left( |Da(x+y)-Da(x)| \gam^\eta(y)+ (a(x+y)-a(x)-Da(x)\cdot y) |D\gam^\eta(y)|\right)\,dy\\
\leq \ & C \int_{\R^n} \left (|y| \gam^\eta(y)+|y|^2 |D\gam^\eta(y)|\right)\,dy \leq C \eta.
\end{align*}

In the case that $a(x)>0$, then we can choose $\eta_0>0$ sufficiently small such that $a(z) \geq c_x>0$ for $|z-x| \leq \eta_0$ for some $c_x>0$. In view of \eqref{class-bdd}, we deduce further that
\begin{equation}\label{2order-bdd}
|\Del w(z)| \leq \frac{C}{c_x}=:C_{x} \qquad \text{for a.e.} \  z \in B(x,\eta_0).
\end{equation}
Thus, for $\eta<\eta_0$, we have 
\begin{align*}
&|R_2^\eta(x)| = \left| \int_{\R^n} (a(x+y)-a(x)) \Del w(x+y) \gam^\eta(y)\,dy\right|\\
\leq\ & C_x \int_{\R^n} |a(x+y)-a(x)|\gam^\eta(y)\,dy 
\leq C_x \int_{\R^n} |y| \gam^\eta(y)\,dy \leq C_x\eta.
\end{align*}
In both cases, we can conclude that $\lim_{\eta \to 0} |R_2^\eta(x)|=0$. 
Note however that the bound for $|R_2^\eta(x)|$ is dependent on $x$.
\end{proof}

Letting $S^\eta(x):=C\eta+R_2^\eta(x)$, we achieve the result 
of Lemma \ref{lem:com}.

\begin{rem}
We want to emphasize that we need \eqref{class-bdd} for the establishment of Lemma \ref{lem:R2}. 
That is the main reason why we require $w$ to be a solution instead of just a subsolution  of (E) so that \eqref{class-bdd} holds automatically. In fact, \eqref{class-bdd} does not hold for subsolutions of (E) in general. This point is one of the main difference between
first and second order Hamilton--Jacobi equations,
as we do have the estimate \eqref{class-bdd} even just for subsolutions in case $a\equiv 0$, which is the case of first order Hamilton--Jacobi equations.

We also want to comment a bit more on the rate of convergence of $R_2^\eta$ in the above proof. For each $\del>0$, set $U^\del:=\{x\in \T^n\,:\,a(x)=0 \ \text{or} \ a(x)>\del\}$. Then there exists a constant $C=C(\del)>0$ such that
\[
|R_2^\eta(x)| \leq C(\del) \eta \quad \text{for all} \ x \in U^\del.
\]
We however do not know the rate of convergence of $R_2^\eta$ in $\T^n \setminus U^\del$ through the above proof yet. 
\end{rem}

With a more careful computation, we can get a uniform convergence of $R_2^\eta$ with a rate $\eta^{1/2}$, which is the same as
the convergence rate of $S^\eta$.

\begin{proof}[Proof of Lemma \ref{lem:unif}]
Fix $x\in \T^n$.
We consider two cases: (i) $\min_{B(x,\eta)} a \leq \eta$, (ii) $\min_{B(x,\eta)} a > \eta$.

In case (i), there exists $\bar x \in B(x,\eta)$ such that $a(\bar x) \leq \eta$. Then, in light of \cite[Lemma 2.6]{CGMT},
there exists a constant $C>0$ such that,
\[
|Da(\bar x)| \leq C a(\bar x)^{1/2} \leq C \eta^{1/2}.
\]
For any $z\in B(x,\eta)$ we have the following estimates
\begin{equation*}
|Da(z)| \leq |Da(z)-Da(\bar x)|+ |Da(\bar x)| \leq C \eta + C \eta^{1/2} \leq C \eta^{1/2},
\end{equation*}
and
\begin{align*}
|a(z)-a(x)| \leq & \  |a(z)-a(\bar x)|+ |a(x)-a(\bar x)| \leq |Da(\bar x)| (|z-\bar x|+|x-\bar x|)\\
&  \ + C(|z-\bar x|^2+|x-\bar x|^2) \leq C \eta^{3/2}+ C\eta^2 \leq C \eta^{3/2}.
\end{align*}
In light of the two estimates above, we can bound $R_2^\eta$ as 
\begin{align*}
&|R_2^\eta(x)| = \left| \int_{\R^n} (a(x+y)-a(x)) \Del w(x+y) \gam^\eta(y)\,dy\right|\\
=\ &\left| \int_{\R^n} Dw(x+y)\cdot Da(x+y) \gam^\eta(y)\,dy + \int_{\R^n} Dw(x+y)\cdot D\gam^\eta(y) (a(x+y)-a(x))\,dy\right|\\
\leq\ & C \int_{\R^n} \left(\eta^{1/2} \gam^\eta(y)+ \eta^{3/2} |D\gam^\eta(y)|\right)\,dy \leq C \eta^{1/2}.
\end{align*}

In case (ii), we can estimate directly as 
\begin{align*}
&|R_2^\eta(x)| = \left| \int_{\R^n} (a(x+y)-a(x)) \Del w(x+y) \gam^\eta(y)\,dy\right|\\
\leq \ &C \int_{\R^n} \frac{|a(x+y)-a(x)|}{a(x+y)} \gam^\eta(y)\,dy
\leq C \int_{\R^n} \frac{|Da(x+y)| \cdot |y|}{a(x+y)} \gam^\eta(y)\,dy\\
\leq  \ &C \int_{\R^n} \frac{|y|}{a(x+y)^{1/2}} \gam^\eta(y)\,dy
\leq C \int_{\R^n} \frac{|y|}{\eta^{1/2}} \gam^\eta(y)\,dy \leq C \eta^{1/2}.
\qedhere
\end{align*}
\end{proof}

The commutation lemma, Lemma \ref{lem:com}, is independently an interesting result. 
For instance, we can immediately get an equivalence of viscosity subsolutions of (E) and  subsolutions of (E) in the almost everywhere sense as a result of Lemmas \ref{lem:com} and \ref{lem:unif}. 
\begin{prop}\label{prop:ae}
Let $w\in C(\T^n)$ satify \eqref{class-bdd}.  
Then, $w$ is a viscosity subsolution of {\rm(E)}  
if and only if 
$w$ is a subsolution of  {\rm(E)} in the almost everywhere sense. 
\end{prop}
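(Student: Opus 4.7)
The plan is to use the sup-convolution argument from the beginning of Section \ref{sec:com} for one implication and the mollification/commutation argument of Lemma \ref{lem:com} together with viscosity stability for the other. The pivot in both directions is that, under \eqref{class-bdd}, the product $a\Del w$ is an honest $L^\infty$ function, so that the notions of \emph{distributional subsolution} and \emph{a.e.\ subsolution} of (E) coincide: for any $\phi \in C^2(\T^n)$ with $\phi \ge 0$, integration by parts (justified by the $L^\infty$ control on $Dw$ and $a\Del w$) yields
\[
\int_{\T^n} w\,\Del(a\phi)\,dx \;=\; \int_{\T^n} a(x)\Del w(x)\,\phi(x)\,dx,
\]
and a standard localization converts the integral inequality into the pointwise a.e.\ one, and vice versa.

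For the viscosity $\Rightarrow$ a.e.\ implication, I would simply reuse the sup-convolution computation already carried out at the start of Section \ref{sec:com}: that computation only exploits the viscosity subsolution property of $w$ (together with convexity of $H$ and continuity of $a$) and produces a distributional subsolution of (E). The equivalence above then immediately gives $H(x,Dw) \le a(x)\Del w$ almost everywhere.

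For the reverse implication, I would start from an a.e.\ subsolution $w$ of (E) satisfying \eqref{class-bdd}, convert it into a distributional subsolution via the equivalence, and then rerun the proofs of Lemmas \ref{lem:com} and \ref{lem:unif}. The point is that those proofs use only \eqref{class-bdd} and the distributional subsolution property of $w$—the viscosity formulation and any reverse inequality never enter—so they apply verbatim to yield
\[
H(x,Dw^\eta) \le a(x)\Del w^\eta + S^\eta(x) \quad \text{in } \T^n, \qquad \|S^\eta\|_{L^\infty(\T^n)} \le C\eta^{1/2}.
\]
Since $w^\eta \in C^\infty(\T^n)$, this is a classical, and hence a viscosity, subsolution of $H(x,Du) - a(x)\Del u = S^\eta(x)$. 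Passing $\eta \to 0$ by the uniform stability of viscosity subsolutions (with $w^\eta \to w$ and $S^\eta \to 0$ uniformly) then identifies $w$ as a viscosity subsolution of (E).

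The main obstacle will be the bookkeeping check that Section \ref{sec:com}'s estimates for $R_1^\eta$ and $R_2^\eta$ really consume only \eqref{class-bdd} plus the distributional subsolution property, and not any stronger viscosity information. Once this verification is in hand, the proposition follows from the equivalence above combined with two invocations of results already established in the paper.
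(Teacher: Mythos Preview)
Your proposal is correct and follows essentially the same route as the paper's proof: sup-convolution for the forward direction, and mollification via Lemmas \ref{lem:com}--\ref{lem:unif} plus viscosity stability for the reverse. You simply make explicit the distributional/a.e.\ equivalence under \eqref{class-bdd} and the observation that the commutation estimates consume only \eqref{class-bdd} and the distributional subsolution property, both of which the paper invokes implicitly.
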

\begin{proof}
Assume first that $w$ be a viscosity subsolution of (E).
Then by the first part of the proof of Lemma \ref{lem:com}, $w$ is a subsolution of (E) in the distribution sense.
In light of \eqref{class-bdd}, $w$ is furthermore a subsolution of (E) in the almost everywhere sense.

On the other hand, assume that $w$ is a subsolution of {\rm(E)} in the almost everywhere sense.
For each $\eta>0$, let $w^{\eta}$ be the function defined by \eqref{func:molli}.
In view of Lemmas \ref{lem:unif}, and the stability result of viscosity solutions, we obtain that $w$ is a viscosity subsolution of (E). 
\end{proof}

\section{Proof of Theorem \ref{thm:main}}\label{sec:main-proof}
\begin{prop}\label{prop:part2}
We have
$\liminf_{\ep \to 0} u^\ep(x) \geq u^0(x)$. 
\end{prop}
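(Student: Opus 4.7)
Fix $x_0 \in \T^n$ and an arbitrary $w \in \cE$. The plan is to show $\liminf_{\ep \to 0} u^\ep(x_0) \geq w(x_0)$, from which the proposition follows by taking the supremum over $\cE$. The starting point is Lemma \ref{lem:key2} applied to $w$: for every $\ep,\eta>0$,
$$u^{\ep,\eta}(x_0) \geq w^\eta(x_0) - \int_{\T^n} w^\eta \theta^{\ep,\eta}\,dx - \frac{C\eta}{\ep} - \frac{1}{\ep}\int_{\T^n} S^\eta \theta^{\ep,\eta}\,dx.$$
Combining this with Lemma \ref{lem:p1}, which gives $\|u^{\ep,\eta}-u^\ep\|_{L^\infty}\leq C\eta/\ep$, and with the uniform bound $\|S^\eta\|_{L^\infty(\T^n)} \leq C\eta^{1/2}$ from Lemma \ref{lem:unif} (so that the last term is controlled by $C\eta^{1/2}/\ep$, since $\theta^{\ep,\eta}$ integrates to $1$), I obtain
$$u^\ep(x_0) \geq w^\eta(x_0) - \int_{\T^n} w^\eta \theta^{\ep,\eta}\,dx - \frac{C(\eta+\eta^{1/2})}{\ep}.$$

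Next I pass to the limit $\eta \to 0$ with $\ep$ still fixed. The Bernstein gradient estimate holds uniformly in $\eta$, so all measures $\nu^{\ep,\eta}$ are supported in a common compact set $\T^n \times \overline{B}(0,R)$, hence form a weakly precompact family. I extract a subsequence $\eta_k \to 0$ with $\nu^{\ep,\eta_k}\rightharpoonup \nu^\ep$. Rewriting
$$\int_{\T^n} w^{\eta_k} \theta^{\ep,\eta_k}\,dx = \int_{\T^n \times \R^n} w^{\eta_k}(x)\,d\nu^{\ep,\eta_k}(x,p)$$
and using the uniform convergence $w^{\eta_k}\to w$ on $\T^n$, I deduce
$$u^\ep(x_0) \geq w(x_0) - \int_{\T^n \times \R^n} w(x)\,d\nu^\ep(x,p).$$

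Finally, choose a subsequence $\ep_j \to 0$ along which $u^{\ep_j}(x_0)\to\liminf_{\ep\to 0} u^\ep(x_0)$, and by the same compactness argument extract a further subsequence along which $\nu^{\ep_j}\rightharpoonup \nu$. By the construction in Section \ref{subsec:mather}, $\nu = \Phi_*\mu$ for some $\mu \in \cM$, and since $w$ depends only on $x$ the pushforward identity yields $\int w\,d\nu = \int w\,d\mu$. Passing to the limit along this subsequence gives
$$\liminf_{\ep \to 0} u^\ep(x_0) \geq w(x_0) - \int_{\T^n \times \R^n} w\,d\mu \geq w(x_0),$$
where the last inequality uses $w \in \cE$. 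Taking the supremum over $w\in\cE$ yields the proposition. The technical heart of the argument is that the uniform rate $\eta^{1/2}$ from Lemma \ref{lem:unif} is exactly what lets the error $\frac{1}{\ep}\int S^\eta \theta^{\ep,\eta}\,dx$ vanish as $\eta\to 0$ for each fixed $\ep$, no matter how singular the approximate adjoint densities $\theta^{\ep,\eta}$ become; pointwise vanishing of $S^\eta$ alone would not suffice because the reference measure changes with $\eta$.
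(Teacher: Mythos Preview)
Your proof is correct and follows essentially the same route as the paper: start from Lemma \ref{lem:key2}, use Lemma \ref{lem:unif} to kill the $S^\eta$-term, pass $\eta\to 0$ and then $\ep\to 0$ along suitable subsequences so that the limiting measure lies in $\cM$, and conclude via the defining constraint of $\cE$. You are in fact slightly more explicit than the paper (invoking Lemma \ref{lem:p1} to pass from $u^{\ep,\eta}$ to $u^\ep$), and your closing remark that the uniform rate $\eta^{1/2}$ is essential because the reference measures $\theta^{\ep,\eta}$ vary with $\eta$ is exactly the point.
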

\begin{proof}
Let $\phi\in\cE$, i.e., a solution of (E) satisfying  \eqref{class-sub}, and  
$\phi^{\eta}$ be the function defined by \eqref{func:molli}. 
Fix $x_0 \in \T^n$. 

Take two subsequences $\ep_{j}\to0$  and $\eta_k\to 0$ 
so that \eqref{mather} holds, and $\lim_{j\to \infty}u^{\ep_j}(x_0)=\liminf_{\ep\to0}u^{\ep}(x_0)$. 
Let $\mu$ be the corresponding measure to satisfy $\nu=\Phi_{\#}\mu$.
Sending $k\to\infty$ in \eqref{ineq:key2}, 
we get 
\[
u^{\ep_j}(x_0) \geq \phi(x_0)-\int_{\T^n \times \R^n} \phi(x) d\nu^{\ep_j}(x,p)
\]
in view of Lemma \ref{lem:unif} . 
Let $j\to \infty$ in the above inequality to deduce further that
\[
\lim_{j\to \infty}u^{\ep_j}(x_0) \geq \phi(x_0)-\int_{\T^n \times \R^n} \phi(x) d\nu(x,p)=\phi(x_0)-\int_{\T^n \times \R^n} \phi(x) d\mu(x,v) \geq \phi(x_0),
\]
which implies the conclusion. 
\end{proof}

\begin{prop}\label{prop:part1}
Let $\{\ep_j\}_{j\in\N}$ be any subsequence converging to $0$ such that $u^{\ep_j}$ uniformly converges 
to a solution $u$ of {\rm(E)} as $j\to\infty$. Then the limit $u$ belongs to $\cE$. 
In particular, $\limsup_{\ep\to0}u^{\ep}(x)\le u^0(x)$. 
\end{prop}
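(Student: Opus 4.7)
The plan is to take the full force of Proposition~\ref{prop:key1}, pass to the limit along the given subsequence $\{\ep_j\}$, and then deduce the pointwise $\limsup$ bound by a routine compactness argument combined with the first conclusion. No new ingredient beyond Proposition~\ref{prop:key1} is really required.

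\emph{First I would show $u\in\cE$.} Fix an arbitrary $\mu\in\cM$. Proposition~\ref{prop:key1} gives
\[
\int_{\T^n\times\R^n} u^{\ep_j}(x)\,d\mu(x,v)\le 0 \qquad \text{for every } j\in\N.
\]
Since $u^{\ep_j}\to u$ uniformly on $\T^n$ and $\mu$ is a probability measure,
\[
\left|\int_{\T^n\times\R^n}(u^{\ep_j}-u)\,d\mu\right|\le \|u^{\ep_j}-u\|_{L^\infty(\T^n)}\longrightarrow 0,
\]
so that $\int_{\T^n\times\R^n} u(x)\,d\mu(x,v)\le 0$. As $\mu\in\cM$ was arbitrary and $u$ is a solution of {\rm(E)} by hypothesis, this places $u$ in $\cE$.

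\emph{Next I would deduce the $\limsup$ bound.} Fix $x\in\T^n$ and pick $\ep_k\to 0$ with $u^{\ep_k}(x)\to\limsup_{\ep\to 0}u^\ep(x)$. The Lipschitz estimate \eqref{intro:apriori}, together with a short comparison argument pitting $u^\ep$ against translates of any fixed solution of {\rm(E)} (which, since $c=0$, become sub- or supersolutions of {\rm(E)}$_\ep$ after subtracting their max or min), yields a uniform bound $\|u^\ep\|_{L^\infty(\T^n)}\le C$. By Arzel\`a--Ascoli, a further subsequence $u^{\ep_{k_\ell}}$ converges uniformly on $\T^n$ to some $v\in C(\T^n)$; since $\ep_{k_\ell} u^{\ep_{k_\ell}}\to 0$ uniformly, the stability theorem for viscosity solutions forces $v$ to be a viscosity solution of {\rm(E)}. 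Applying the first part of the proposition to the subsequence $\{\ep_{k_\ell}\}$ yields $v\in\cE$, and therefore
\[
\limsup_{\ep\to 0}u^\ep(x)\;=\;v(x)\;\le\; u^0(x).
\]

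There is no serious obstacle in this proof: the entire nontrivial content — construction of $\cM$, the invariance identities of Proposition~\ref{prop:mather}, and especially the $C^2$-regularization of $u^\ep$ via the commutation lemma — is already packaged inside Proposition~\ref{prop:key1}. The only points requiring any attention are (i) the integral passage, which is immediate from uniform convergence against a probability measure, and (ii) the uniform $L^\infty$ bound on $\{u^\ep\}$ needed in the second step, which follows from the Lipschitz bound and the ergodic constant being $0$; both are cheap by comparison with the work done earlier.
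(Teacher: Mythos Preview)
Your proof is correct and follows essentially the same approach as the paper. The paper's own argument is a single sentence invoking Proposition~\ref{prop:key1} and the definition of $u^0$; you have simply made explicit the passage to the limit in the integral inequality and the routine compactness argument behind the $\limsup$ bound (including the uniform $L^\infty$ estimate on $\{u^\ep\}$ via comparison with shifts of a fixed solution of {\rm(E)}), both of which the paper leaves implicit.
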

\begin{proof}
In view of Proposition \ref{prop:key1}, and by the definition of the function $u^0$, 
it is obvious that $\lim_{j\to\infty} u^{\ep_j}(x)\le u^0(x)$. 
\end{proof}
It is clear now that Theorem \ref{thm:main} is a straightforward consequence of 
Propositions \ref{prop:part2}, \ref{prop:part1}.

\begin{thebibliography}{30} 
\bibitem{Am}
L. Ambrosio,
\emph{Transport equation and Cauchy problem for BV vector fields},
 Invent. Math. {\bf 158}, 227--260.

\bibitem{AAIY}
E. S. A.-Aidarous, E. O. Alzahrani, H. Ishii, A. M. M. Younas, 
\emph{A convergence result for the ergodic problem for Hamilton--Jacobi eqautions with Neumann type boundary conditions}, preprint. 

\bibitem{AIPSM}
N. Anantharaman, R. Iturriaga, P. Padilla, H. Sanchez-Morgado, 
\emph{Physical solutions of the Hamilton-Jacobi equation}, 
Discrete Contin. Dyn. Syst. Ser. B 5 (2005), no. 3, 513--528. 

\bibitem{AT}
S. N. Armstrong, H. V. Tran,
\emph{Viscosity solutions of general viscous Hamilton--Jacobi equations}, to appear in Math. Ann.


\bibitem{B}
U. Bessi, 
\emph{Aubry-Mather theory and Hamilton-Jacobi equations}, 
Comm. Math. Phys. 235 (2003), no. 3, 495--511.  

\bibitem{CGMT}
F. Cagnetti, D. Gomes, H. Mitake, H. V. Tran, 
\emph{A new method for large time behavior of convex Hamilton--Jacobi equations: degenerate equations and weakly coupled systems}, 
to appear in Ann. Inst. H. Poincare Anal. Non Lineaire. 

\bibitem{CGT1} 
F. Cagnetti, D. Gomes, H. V. Tran, 
\emph{Aubry-Mather measures in the non convex setting},
{SIAM J. Math. Anal. {\bf 43} (2011), no. 6, 2601--2629}. 

\bibitem{CGT2}
 F. Cagnetti, D. Gomes, H. V. Tran, 
\emph{Adjoint methods for obstacle problems and weakly coupled
systems of PDE},
 ESAIM: Control, Optimisation and Calculus of Variations {\bf 19} (2013), no. 3,
754--779.

\bibitem{CIL}
M. G. Crandall, H. Ishii, and P.-L. Lions,
\emph{User's guide to viscosity solutions of second order partial differential equations},
Bulletin of the  Amer. Math. Soc. {\bf 27}, no 1, 1992, 1--67.

\bibitem{DFIZ}
A. Davini, A. Fathi, R. Iturriaga, M. Zavidovique, 
\emph{Convergence of the solutions of the discounted equation}, preprint. 

\bibitem{DiL}
R. J. Di Perna, P.-L. Lions,
\emph{Ordinary differential equations, transport theory and Sobolev spaces},
 Invent. Math. {\bf  98}, 511--547 (1989).

\bibitem{Ev1}
L. C. Evans,
\emph{Adjoint and compensated compactness methods for Hamilton--Jacobi PDE}, 
Arch. Rat. Mech. Anal. {\bf 197} (2010), 1053--1088.

\bibitem{Ev2}
L. C. Evans,
\emph{Envelopes and nonconvex Hamilton--Jacobi equations},
 Calculus of Variations and PDE {\bf 50} (2014), 257--282.

\bibitem{F1}
A. Fathi, 
\emph{Th\'eor\`eme {KAM} faible et th\'eorie de Mather sur les syst\`emes lagrangiens}, 
C. R. Acad. Sci. Paris Ser. I Math. 324 (1997), no. 9, 1043--1046. 

\bibitem{FaB}
A. Fathi,
 Weak KAM Theorem in Lagrangian Dynamics.

\bibitem{FS}
A. Fathi, A. Siconolfi, 
\emph{PDE aspects of Aubry-Mather theory for quasiconvex Hamiltonians}, 
Calc. Var. Partial Differential Equations 22 (2005), no. 2, 185--228. 

\bibitem{G1}
D. A. Gomes, 
\emph{A stochastic analogue of Aubry-Mather theory}, 
Nonlinearity 15 (2002), no. 3, 581--603. 

\bibitem{G2}
D. A. Gomes, 
\emph{Generalized Mather problem and selection principles for viscosity solutions and Mather measures}, 
Adv. Calc. Var., 1 (2008), 291--307.

\bibitem{I}
H. Ishii, 
\emph{On the equivalence of two notions of weak solutions, viscosity solutions and distribution solutions}, 
Funkcial. Ekvac. 38 (1995), no. 1, 101--120. 
 
\bibitem{ISM1}
R. Iturriaga, H. Sanchez-Morgado, 
\emph{On the stochastic Aubry-Mather theory}, 
Bol. Soc. Mat. Mexicana (3) 11 (2005), no. 1, 91--99. 

\bibitem{ISM2}
R. Iturriaga, H. Sanchez-Morgado, 
\emph{Limit of the in finite horizon discounted Hamilton--Jacobi equation}, Discrete Contin. Dyn. Syst. Ser. B, 15 (2011), 623--635. 

\bibitem{Je}
R. Jensen, 
\emph{The maximum principle for viscosity solutions of fully nonlinear second order partial differential equations}, 
Arch. Rat. Mech. Anal. {\bf 101} (1988), 1--27.

\bibitem{JLS}
R. Jensen, P.-L. Lions, P. E. Souganidis, 
\emph{A uniqueness result for viscosity solutions of second order fully nonlinear partial differential equations}, 
Proc. Amer. Math. Soc. 102 (1988), no. 4, 975--978.

\bibitem{LeL}
C. Le Bris, P.-L. Lions,
\emph{Existence and uniqueness of solutions to Fokker-Planck type equations with irregular coefficients},
Comm. Partial Differential Equations {\bf 33} (2008), no. 7-9, 1272--1317. 

\bibitem{L81}
P.-L. Lions,
\emph{Control of Diffusion Processes in $\R^N$},
Comm. Pure and Applied Math. (1981), 121--147.

\bibitem{LPV}  
P.-L. Lions, G. Papanicolaou, S. R. S. Varadhan,  
Homogenization of Hamilton--Jacobi equations, 
unpublished work (1987). 

\bibitem{Man}
R. Ma\~n\'e, 
\emph{Generic properties and problems of minimizing measures of Lagrangian systems}.  Nonlinearity 9 (1996), no. 2, 273--310. 

\bibitem{M}
J. N. Mather, 
\emph{Action minimizing invariant measures for positive definite Lagrangian systems}, 
Math. Z. 207 (1991), no. 2, 169--207.  

\bibitem{MT4}
H. Mitake, H. V. Tran, 
\emph{Large-time behavior for obstacle problems for degenerate viscous Hamilton--Jacobi equations}, submitted.


\bibitem{T1}
H. V. Tran, 
\emph{Adjoint methods for static Hamilton-Jacobi equations},
 Calculus of Variations and PDE {\bf 41} (2011), 301--319. 
\end {thebibliography}
\end{document}